\definecolor{darkred}{rgb}{0.9,0.1,0.1}
\newtheorem{proposition}{Proposition}[section]
\newtheorem{theorem}{Theorem}[section]
\newtheorem{lemma}{Lemma}[section]
\newtheorem{corollary}{Corollary}
\newtheorem{remark}{Remark}[section]
{\rm}
\definecolor{darkred}{rgb}{0.9,0.1,0.1}
\def\eps{\varepsilon}
\def\eps{\varepsilon}
\begin{document}

\title{Homogenization of a diffusion in a high-contrast random %Bernoulli
 environment and related Markov semigroups}
\author{B. Amaziane$^1$, \ A. Piatnitski$^{2,3,}$\footnote{The work of the second  author was supported
by Leonhard Euler International Mathematical Institute in Saint Petersburg, grant No. 075-15-2019-1619  }
\setcounter{footnote}{3}
,\ E. Zhizhina$^{3,}$\footnote{The work of the third  author was supported
by Leonhard Euler International Mathematical Institute in Saint Petersburg, grant No. 075-15-2019-1620  }}
\date{\small $^1$ Laboratoire de Math\'ematiques et de leurs Applications de Pau - UMR CNRS 5142,\\
University of Pau and  Pays de l'Adour
Avenue de l'Universit\'e - BP 1155\\
64013 Pau, France
\\[2mm]
\small $^2$ The Arctic university of Norway, campus Narvik, \\
P.O.Box 385, 8505 Narvik, Norway\\[2mm]
$^3$ The Institute for Information Transmission Problems of RAS, \\
19, Bolshoy Karetny per. 127051,  Moscow, Russia\\[5mm]
}

\maketitle

\begin{abstract}
The goal of the paper is to describe the large time behaviour of a Markov process associated with a symmetric diffusion in a high-contrast random environment and to characterize the limit semigroup and the limit process under the diffusive scaling.
\end{abstract}

\section{Introduction}
\label{s_I}
The paper focuses on the large time behaviour of a diffusion in a high contrast random statistically homogeneous environment.
We also study the limit behaviour of the corresponding semigroups.
Equivalently, we consider  the limit behaviour of a diffusion  defined in a high contrast
environment with a random  microstructure on a finite time interval.

Elliptic and parabolic operators with high contrast rapidly oscillating periodic coefficients have been widely studied in the homogenization theory. The first rigorous results for parabolic operators of this type were obtained in \cite{MKh} and \cite{ArDoHo}.
In particular, it was shown that, under proper choice of the scaling coefficient,  the homogenized problem contains a non-local in time operator which reflects the so-called memory effect. Later on in \cite{Al}, with the help of the two-scale convergence technique, the limit problem was written as a coupled system of parabolic PDEs in the space with higher number of variables.
In the works \cite{Zh}, \cite{Zh1} high contrast problems  in domains with singular or asymptotically singular periodic geometry were considered.
 At present, there are many works in the existing mathematical literature
that describe the effective behaviour of high contrast periodic media. Under proper scaling, in parabolic problems this usually
results in the memory effect while homogenization of spectral problems leads to a  non-linear dependence on the spectral parameter.

In this paper we deal with second order divergence form operators in $\mathbb R^d$. Each such an operator is a generator of
a Markov semigroup. The corresponding Markov process  (generalized diffusion) has continuous trajectories.   However, the presence
of a non-local in temporal variable  term in the effective operator means that the limit dynamics of the coordinate process is not Markov.

The goal of this work is to equip the coordinate process with  additional components in such a way that the dynamics of the enlarged
process remains Markovian in the limit.  We show that it is sufficient to combine the coordinate process in $\mathbb R^d$ with a position of the diffusion inside the rescaled inclusions for the time intervals when the diffusion is trapped by one of the inclusions.

It is interesting to observe that, although in the original processes the additional components are  functions of the coordinate
process, in the limit  process these components are getting independent while the coordinate process becomes coupled with them.

The explicit form of the limit operator on the extended space gives us a possibility to use this operator as a
generator of an approximation dynamics for the processes in high contrast random stationary dispersive porous media.
%with random structure of inclusions.
The discrete version of such  approximation process was constructed  in the work \cite{PZh}, where we
considered a discrete diffusion in a high-contrast random environment given by a jump random walk on the lattice $\mathbb Z^d$.
The crucial step in this construction is to describe the "clock process" governing transitions
from the observable "real" space to the supplementary "astral" spaces and back.
The "clock process" is a continuous time finite Markov chain
with transition rates depending on parameters of the limit operator.

%In the paper we consider the mentioned problem both in the spaces of continuous functions and in the $L^2$ framework.
%In both cases we introduce proper functional spaces, construct the limit semigroup, and prove the semigroup convergence.

In the paper we introduce proper functional spaces, construct the limit semigroup, and prove the semigroup convergence.

%We then use these results  in order to obtain the convergence in law, in the path space, of  the corresponding Markov process.

%In the $L^2$ setting,
In addition to proving the semigroup convergence, we study the spectrum of the generator of the
limit semigroup.
Then the semigroup convergence in $L^2$ spaces allows us to provide some information about the limit behaviour of the spectrum
of the original operators.
%In particular, we partly reprove the results on the convergence of the spectra obtained by V.Zhikov in \cite{Zh}
%and \cite{Zh2} by means of two-scale convergence technique.

To our best knowledge, the questions considered in this paper  have not been studied in the existing literature.
 In the discrete framework the results on scaling limits of symmetric  random walks in a high contrast periodic environment
 were obtained in our previous work \cite{PZh}.

 Our approach essentially relies on the approximation technique developed in \cite{EK} and the technique of
 correctors in random media. In contrast with the periodic framework, the auxiliary operators used to introduce correctors
 need not be of Fredholm type in the case of random inclusions. The construction of the first corrector can be found in the existing literature, see for instance \cite{JKO}. However, when defining the higher order correctors we face additional difficulties.
% This makes these problems rather non-trivial.    %

%===============================================

\section{Problem setup}
\label{s_setup}

Let $(\Omega,\mathcal{F},\mathbf{P})$ be a standard probability space.
Consider a symmetric diffusion operator in divergence form
\begin{equation}\label{Aeps}
A^\omega_\varepsilon f (x) = \mathrm{div} \left( a^\omega_\varepsilon(x) \nabla f(x) \right),
%\quad a_\eps(x) = \{ a_\eps^{ij} (x) \}_{i,j=1}^d,
\end{equation}
where
\begin{equation}\label{a}
a^\omega_\varepsilon(x) = \left\{
\begin{array}{l}
\mathbb{I} ,\quad x \in \mathbb R^d \setminus \varepsilon G^\omega = \big(G^\omega_\varepsilon \big)^c, \\
\varepsilon^2 \mathbb{I} ,\quad x \in \varepsilon G^\omega,
\end{array}
\right.
\end{equation}
$\mathbb{I}$ being the unit matrix.
Here $G^\omega_\varepsilon = \varepsilon G^\omega$ and $G^\omega \subset \mathbb{R}^d,\, \omega \in \Omega,$ is  a classical disperse medium.  That is $\mathbb R^d\setminus G^\omega$ is a random statistically homogeneous set  such that almost surely (a.s) it is connected and unbounded and its complement
$G^\omega \subset \mathbb{R}^d$ consists of  a countable number of uniformly  bounded simply connected
domains with uniformly Lipschitz boundary. Moreover, the distance between any two such domains admits a uniform
deterministic lower bound. The set $ G^\omega$ corresponds to the matrix blocks (inclusions), and $\mathbb R^d\setminus G^\omega \subset \mathbb{R}^d$ to the fractures system, see \cite{BMP2003}.

To be more specific in this work we consider a class of disperse media that satisfy the following additional condition:\\
%all the inclusions that form $G^\omega \subset \mathbb{R}^d$ are uniformly $C^2$ smooth domains;\\
there is a
%countable or
{\bf finite}  collection of bounded domains in $\mathbb R^d$ such that all these domains are uniformly $C^2$ regular and a.s. any connected component of $ G^\omega$ can be obtained by a proper translation and rotation of one of these domains.
The domains are denoted by   ${\cal D}_{j}$, $j=1,\ldots,N$,
%$j\in Z^+$,
and the whole collection by ${\cal D}$,
%\ $\mathcal{D}= \{\mathcal{D}_j\}_{j=1}^\infty$.  If the collection is finite, then
$\mathcal{D}=
\{\mathcal{D}_j\}_{j=1}^N$ with $N\in\mathbb Z^+$.  We assume without loss of generality that each domain $\mathcal{D}_j$, $j\geq 1$,
contains the origin.

An example of such a disperse medium is
associated with a Bernoulli %subcritical
site percolation model on the lattice $\mathbb{Z}^d$ embedded in
$\mathbb{R}^d$. Let $\{ \xi_j, \; j\in \mathbb{Z}^d \}, \ \xi_j \in \{0,1\}$ be a sequence of i.i.d. random variables having the Bernoulli law: $\mathbb{P}(\xi_j =1)=p, \; \mathbb{P}(\xi_j =0)=1-p$.
We then define $\mathtt{B}_j=j+[-\frac12,\frac12]^d$, $j\in\mathbb Z^d$, and consider the set $\mathtt{G}^{1,\omega}=\bigcup\limits_{\{j\,:\,\xi_j=1\}} \,\mathtt{B}_j$. This set is a.s. a union of countable number
of bounded connected sets (components) and not more than one unbounded connected component, see \cite{Grim}.   We replace
each bounded connected component of $\mathtt{G}^{1,\omega}$ with the minimal simply connected set that contains this component. Then we choose the sets that have Lipschitz boundary and whose volumed does not exceed $M$,
where $M\geq 1$ is a given positive integer. We then smoothen these sets and obtain a finite collection $\mathcal{D}$ of the reference sets .
%is then defined as the union of the chosen sets.
By the standard arguments of percolation theory,  $G^\omega$ is statistically homogeneous and ergodic.

Denote by ${\cal G}^\omega_j$ the subset of $G^\omega$ that consists of all the components which have the same geometry as $\mathcal{D}_j$, that is each such a component can be obtained by a proper rotation and translation of $\mathcal{D}_j$ in $\mathbb{R}^d$.
The connected components of ${\cal G}^\omega_j$ are denoted by $\{{\cal G}^{\omega, i}_j\}_{i\in \mathbb Z^+}$.
We also denote ${\cal G}^\omega_0=\mathbb R^d\setminus G^\omega$.

Letting
\begin{equation}\label{alpha_sup_o}
\alpha_0 = \mathbf{P} \big\{ 0 \in (G^{\omega})^c  \big\}, \quad \alpha^o_{j} =  \mathbf{P} \big\{ 0 \in {\cal G}^\omega_{j}  \big\}, \quad \alpha_0 + \sum_{\ j\geq 1} \alpha^o_j  = 1,
\end{equation}
we assume without loss of generality that  $\alpha^o_j>0$ for all $j$.  We then introduce
\begin{equation}\label{alpha}
 \alpha_{j}= |\mathcal{D}_j|^{-1} \alpha^o_{j}=  |\mathcal{D}_j|^{-1}\mathbf{P} \big\{ 0 \in {\cal G}^\omega_{j}  \big\}.
\end{equation}

\medskip
{ For each $\eps>0$ the operator  $A^\omega_\eps$ has random statistically homogeneous coefficients in $\mathbb R^d$, where the randomness is defined through the random geometry of  $G^\omega$. These operators are also called metrically transitive with respect to the unitary group of the space translations in $\mathbb R^d$.}
%It is well known (see, for instance, \cite{Fuku}) that for any $\eps>0$ the operator $A_\eps$ in $C_0(\mathbb R^d)$ is the generator of
%a strongly continuous positive contraction semigroup $T_\eps(t)=e^{A_\eps t}$. The domain of this operator is  dense in   $C_0(\mathbb %R^d)$,
%the operator is closed, and for any $\lambda>0$ the resolvent $(\lambda-A_\eps)^{-1}$ is a bounded operator. Moreover,
%$A_\eps$ is the generator of a Markov process with continuous trajectories in $\mathbb R^d$, for this process we use the notation
%$Y_\eps(t)$, $t\geq 0$.
In $L^2(\mathbb R^d)$ we introduce a domain of $A^\omega_\eps$ by
\begin{equation}\label{DAL2}
\begin{array}{rl}
D(A^\omega_\varepsilon) = \Big\{ f \in H^1(\mathbb R^d), \; f \in H^2 ({G_\varepsilon^\omega}) \cap H^2 ({ \mathbb R^d \setminus G_\varepsilon^\omega}), \\[1.5mm]
 \varepsilon^2
 \nabla f(x) \big|_{\partial G_\varepsilon^\omega} \cdot n^+ = -
 \nabla f(x)\big|_{\partial G_\varepsilon^\omega} \cdot n^- \Big\}
\end{array}
\end{equation}
The last relation  %on $f\in D(A^\omega_\varepsilon)$
in (\ref{DAL2}) represents the continuity of flux $a_\varepsilon \nabla f$ through %each element $\partial (\varepsilon {\cal D}_{j}^{k,\omega})$  of
the boundary $\partial G_\varepsilon^\omega$. Here $n^-, n^+$ are respectively the internal and external normals on $\partial G_\varepsilon^\omega$.

\begin{remark}
Notice that %condition $f \in H^1(\mathbb R^d)$ implies that the trace of
for any function $v\in D(A^\omega_\varepsilon)$ its trace and the trace of its flux on the interface
$\partial G_\varepsilon^\omega$ is a well-defined $L^2(\partial G_\varepsilon^\omega)$ function.
\end{remark}

Then $(A^\omega_\eps, D(A^\omega_\eps))$ is almost surely a self-adjoint operator in $L^2(\mathbb R^d)$,  and for any $\lambda>0$ the operator $(\lambda-A^\omega_\eps)$ is coercive.  By the Hille-Yosida theorem, $A^\omega_\varepsilon$ is a generator of a strongly continuous, positive, contraction semigroup $T^\omega_\varepsilon (t)$  on $L^2(\mathbb R^d)$ for a.e. $\omega \in \Omega$.

\section{The limit operator.}

%In this section we  consider the limit behaviour of $A^\omega_\eps$ in $L^2$ framework.

In this section we describe the generator of the limit Markov semigroup. Denote
$$
E = \mathbb R^d \times {\cal D}^\star, \quad \mbox{ where } \; {\cal D}^\star = \{ \star \} \cup {\cal D}.
$$
We consider functions $F$ defined on $E$  of the following vector form
\begin{equation}\label{vectorF}
F(x, \xi) = \left\{
\begin{array}{l}
f_0 (x), \quad \mbox{if } \; x \in \mathbb{R}^d, \, \xi = \star, \\
f_{j}(x, \xi), \quad \mbox{if }  x \in \mathbb R^d, \, \xi \in  {\cal D}_j,\; j= 1,\, \ldots, N,
\end{array}
\right.
\end{equation}
with $f_0 \in L^2( \mathbb R^d), \quad f_{j} \in L^2( \mathbb R^d \times {\cal D}_{j})$.
Equipped with the norm
\begin{equation}\label{F2}
\|F\|^2 = \alpha_0  \int\limits_{\mathbb R^d} f^ 2_0(x)\,dx + \sum_{j=1}^N \alpha_{ j} \int\limits_{\mathbb R^d} \int\limits_{{\cal D}_{j}} (f_{j})^2 (x,\xi)\,d\xi dx
\end{equation}
where $\alpha_0, \ \alpha_{j} $ was defined in \eqref{alpha}, $\alpha_0>0, \ \alpha_{ j}>0 $, it is a Hilbert space.
%We take the sequence $\alpha_{k, j(k)}$ in accordance with the Bernoulli percolation model as above, i.e.
%$$
%\alpha_0 = \Pr\{0 \in (G^\omega)^c \} = p, \quad \alpha_{k, j(k)} = \Pr \{ 0 \in {\cal D}_{j(k)}^{k, \omega} \}, \quad \sum_{k=1}^{\infty} \sum_{j(k)} \alpha_{k, j(k)} = 1-p,
%$$
We call this Hilbert space $L^2(E, \alpha)$.

Let us consider in  $L^2(E, \alpha)$ an operator of the following form
\begin{equation}\label{A}
(A F)(x,\xi) = \left(
\begin{array}{c}
\Theta \cdot \nabla \nabla f_0 (x) + \frac{1}{\alpha_0}  \sum_{j\geq1} \alpha_{ j}  \int\limits_{\partial  {\cal D}_{j} }  \frac{\partial f_{j} (x, \xi)}{\partial n^-_{\xi}} d \sigma(\xi) \\[4mm]
\triangle_\xi f_1 (x,\xi) \\ \ldots \\
\triangle_\xi f_{N}(x, \xi)
\end{array}
\right)
\end{equation}
where a positive definite constant matrix $\Theta$ will be defined later on, $\sigma(\xi)$ is the element of the surface volume on the Lipshitz boundary $\partial  {\cal D}_{j}^{k}$, $ n_{\xi}^-$ is the (inner) normal  to $\partial  {\cal D}_{j}^{k}$.
Using the relation $n^+ = - n^-$ and the Stokes formula one can rewrite the operator \eqref{A} as follows:
\begin{equation}\label{A-}
(A F)(x,\xi) = \left(
\begin{array}{c}
\Theta \cdot \nabla \nabla f_0 (x) - \frac{1}{\alpha_0}  \sum\limits_{j\geq1}  \alpha_{ j} \int\limits_{ {\cal D}_{j} }  \triangle_\xi f_{j} (x, \xi) d \xi \\[4mm]
\triangle_\xi f_1 (x,\xi) \\  \ldots \\
\triangle_\xi f_{N}(x, \xi)
\end{array}
\right)
\end{equation}
We denote
\begin{equation}\label{Y}
\Upsilon(x) = - \frac{1}{\alpha_0}  \sum_{j\geq1} \alpha_{ j} \int\limits_{ {\cal D}_{j} }  \triangle_\xi f_{j} (x, \xi) d \xi.
\end{equation}

For each set ${\cal D}_{j}$, $j\geq 1$, denote by $\mathbb D_j(\Delta)$ the domain of a self-adjoint operator in $L^2({\cal D}_{j})$
that corresponds to the Laplacian in   ${\cal D}_{j}$ with homogeneous Dirichlet boundary conditions. Since the boundary of ${\cal D}_{j}$ is $C^2$ regular, we have  $\mathbb D_j(\Delta)=H^2({\cal D}_{j})\cap H^1_0({\cal D}_{j})$.
%We assume that $\mathbb D_j^k(\Delta)\subset H^1_0({\cal D}_{j}^{k})$ and that
Notice that this operator is positive.
%Then the operator and its domain  $\mathbb D_j^k(\Delta)$ are uniquely defined, see \cite{RiSa}.
The space $\mathbb D_j(\Delta)$ is equipped with the norm $\|g\|_{\mathbb D_j(\Delta)}=\|\Delta g\|_{L^2({\cal D}_{j})}$.

Defining an operator $A$ in $L^2(E, \alpha)$  by formulas \eqref{A}, \eqref{A-}, one can easily check that, with a domain
\begin{equation}\label{DAL2eff}
\begin{array}{c}
\hat D(A) = \left\{f_0 \in H^2(\mathbb R^d),\ f_j-f_0\in L^2(\mathbb R^d; {\mathbb D}_{j}(\Delta)),\    f_{j}(x,\xi)\Big|_{\xi \in \partial {\cal D}_{j}} = f_0(x),\right.\\
\left.  \Big( \Theta \cdot \nabla \nabla f_0 (x) - \frac{1}{\alpha_0}  \sum_{j\geq1}  \alpha_{ j} \int\limits_{ {\cal D}_{j} }  \triangle_\xi f_{j} (x, \xi) d \xi,\, \triangle_\xi f_{1}(x, \xi),\ldots,\triangle_\xi f_{N}(x, \xi)  \Big)\in L^2(E,\alpha)\right\},
\end{array}
\end{equation}
the operator $(A, \hat D(A))$ is a closed symmetric operator in $L^2(E,\alpha)$,  and $\hat D(A)$ is dense in $L^2(E, \alpha)$.

\medskip
We introduce the following two spaces:
\begin{equation}\label{h1alpha}
%\begin{array}{c}
H^1_{ D} (E,\alpha)=
 \Big\{
f_0 \in H^1(\mathbb R^d), \  f_{j}-f_0\in L^2(\mathbb R^d; H_0^1({\cal D}_{j}))
%\\ \sum\limits_{j\geq1} \alpha_{ j} \int\limits_{\mathbb R^d} \int\limits_{{\cal D}{j}} | \nabla_\xi f_{j}|^2
%(x,\xi)\, d\xi dx<\infty
\Big\}
%\end{array}
\end{equation}
and
\begin{equation}\label{h1alpha}
%\begin{array}{c}
H^2_D (E,\alpha)=
 \Big\{
f_0 \in H^2(\mathbb R^d), \  f_{j}-f_0\in L^2(\mathbb R^d; H^2({\cal D}_{j})\cap H_0^1({\cal D}_{j}))
%\\
 % \sum\limits_{j\geq1} \alpha_{ j} \|f_j\|^2_{L^2(\mathbb R^d;H^2(\mathcal{D}_j))}
%  \int\limits_{\mathbb R^d} \int\limits_{{\cal D}{j}} | \nabla_\xi f_{j}|^2 (x,\xi)\, d\xi dx <\infty
\Big\}.
%\end{array}
\end{equation}
Notice that
$$
 \sum\limits_{j=1}^N \alpha_{ j} \int\limits_{\mathbb R^d} \int\limits_{{\cal D}{j}} | \nabla_\xi f_{j}|^2
(x,\xi)\, d\xi dx<\infty \quad \forall F \in H^1_D (E,\alpha)
$$
and
$$\sum\limits_{j=1}^N \alpha_{ j} \|f_j\|^2_{L^2(\mathbb R^d;H^2(\mathcal{D}_j))}
%  \int\limits_{\mathbb R^d} \int\limits_{{\cal D}{j}} | \nabla_\xi f_{j}|^2 (x,\xi)\, d\xi dx
<\infty \quad  \forall F \in H^2_D (E,\alpha).
$$
The space $H^{-1}(E,\alpha)$ is defined as the dual space to $H^1_D(E,\alpha)$ in $L^2(E,\alpha)$.
%{\color{red}
%We say that $V =\{v_j\}\in C_0^\infty({\cal D})$ if
%$ v_{j} \in  C_0^\infty ({\cal D}_{j}), \quad  j\geq1$.
%}
\begin{lemma}\label{LC}
For any $m>0$ the operator $(m-A, \hat D(A))$ is a coercive self-adjoint operator in $L^2(E, \alpha)$.
\end{lemma}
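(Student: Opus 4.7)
The plan is to realize $m-A$ as the self-adjoint operator associated with a symmetric closed coercive sesquilinear form on $L^2(E,\alpha)$, and then identify the domain produced by Kato's first representation theorem with $\hat D(A)$.

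On $H^1_D(E,\alpha)$ I would define the symmetric bilinear form
\begin{align*}
b_m(F,G)=\;& m\,\langle F,G\rangle_{L^2(E,\alpha)}+\alpha_0\!\int_{\mathbb R^d}(\Theta\nabla f_0)\cdot\nabla g_0\,dx\\
&+\sum_{j=1}^N\alpha_j\!\int_{\mathbb R^d}\!\int_{\mathcal{D}_j}\nabla_\xi(f_j-f_0)\cdot\nabla_\xi(g_j-g_0)\,d\xi\,dx,
\end{align*}
and first check that it is well defined, symmetric, continuous, and coercive on $H^1_D(E,\alpha)$ equipped with its natural Hilbert norm. Positive definiteness of $\Theta$ controls $\|\nabla f_0\|_{L^2(\mathbb R^d)}^2$; the Poincar\'e inequality applied fibrewise to $f_j-f_0\in H^1_0(\mathcal{D}_j)$ (available since each $\mathcal{D}_j$ is bounded and $C^2$-regular) upgrades the cross-gradient term to the full $L^2(\mathbb R^d;H^1_0(\mathcal{D}_j))$ norm; and the $m\|F\|_{L^2(E,\alpha)}^2$ piece supplies the $L^2$ component. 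Altogether $b_m(F,F)\geq c\,\|F\|_{H^1_D(E,\alpha)}^2$, so $b_m$ is closed, and $H^1_D(E,\alpha)$ is dense in $L^2(E,\alpha)$ by approximating each component using smooth compactly supported functions subject to the trace condition $f_j|_{\partial\mathcal{D}_j}=f_0$.

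Kato's first representation theorem then produces a unique self-adjoint operator $B_m$ on $L^2(E,\alpha)$ with $D(B_m)\subset H^1_D(E,\alpha)$ and $b_m(F,G)=\langle B_m F,G\rangle_{L^2(E,\alpha)}$ for every $F\in D(B_m)$, $G\in H^1_D(E,\alpha)$; the coercivity $B_m\geq m\,I$ is the required coercivity for $(m-A)$ once the identification $B_m=mI-A$ is complete. For that identification I would take $F\in D(B_m)$ with $B_m F=H\in L^2(E,\alpha)$ and argue in two stages. Testing $b_m(F,G)=\langle H,G\rangle$ with $g_0\equiv 0$ and $g_j$ supported inside a single $\mathcal{D}_j$ produces the distributional identity $-\Delta_\xi(f_j-f_0)=H_j-m f_j$ on $\mathcal{D}_j$ for a.e.\ $x$; since the right-hand side lies in $L^2(\mathbb R^d\times\mathcal{D}_j)$ and $\partial\mathcal{D}_j$ is $C^2$, Dirichlet elliptic regularity gives $f_j-f_0\in L^2(\mathbb R^d;H^2(\mathcal{D}_j)\cap H^1_0(\mathcal{D}_j))$. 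Testing next with $g_j=g_0\in H^1(\mathbb R^d)$ and undoing the $\xi$-integration by parts that went into defining $b_m$ recovers the distributional equation $-\Theta\cdot\nabla\nabla f_0-\Upsilon(x)=H_0-m f_0$ on $\mathbb R^d$; a Cauchy--Schwarz bound places $\Upsilon\in L^2(\mathbb R^d)$, and constant-coefficient elliptic regularity then yields $f_0\in H^2(\mathbb R^d)$. Thus $F\in\hat D(A)$ and $B_m F=(mI-A)F$; the reverse inclusion $\hat D(A)\subset D(B_m)$ follows from a direct integration by parts on \eqref{A-}.

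The main obstacle I expect is precisely this last identification: one must simultaneously extract Dirichlet $H^2$-regularity of $f_j-f_0$ on each fibre $\{x\}\times\mathcal{D}_j$ \emph{and} global $H^2$-regularity of $f_0$ on $\mathbb R^d$ from an equation whose forcing contains the nonlocal quantity $\Upsilon(x)$ encoding fibrewise boundary fluxes. The mechanism that makes everything consistent is that the trace condition $f_j|_{\partial\mathcal{D}_j}=f_0$, built into the form space $H^1_D(E,\alpha)$, forces the two integration-by-parts boundary contributions to cancel exactly. This cancellation is both what produces the clean form $b_m$ and what allows the test-function arguments to close.
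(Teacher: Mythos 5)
Your proposal is correct and follows essentially the same route as the paper: the paper likewise realizes $m-A$ via the quadratic form $\Gamma(F,F)=\alpha_0\int_{\mathbb R^d}\Theta\nabla f_0\cdot\nabla f_0\,dx+\sum_{j}\alpha_j\int_{\mathbb R^d}\int_{\mathcal{D}_j}|\nabla_\xi f_j|^2\,d\xi\,dx+m\|F\|^2_{L^2(E,\alpha)}$ on the same form domain (identical to your $b_m$, since $\nabla_\xi f_j=\nabla_\xi(f_j-f_0)$), invokes the representation theorem to get a self-adjoint operator, and identifies it with $m-A$ by testing against pairs $(0,U_1)$ and $(u_0,0)$, just as you do. The only difference is cosmetic: the paper closes the identification by maximality (a self-adjoint operator admits no proper symmetric extension, so $D(\tilde A_m)=\hat D(A)$), whereas you establish the reverse inclusion $\hat D(A)\subset D(B_m)$ directly by integration by parts; both are valid.
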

\begin{proof}
Consider the following quadratic form in $L^2(E, \alpha)$
\begin{equation}\label{Qform}
\Gamma(F,F) = \alpha_0 \int\limits_{\mathbb R^d} \Theta \nabla f_0(x) \cdot  \nabla f_0(x) \,dx +
 \sum_{j=1}^N \alpha_{ j} \int\limits_{\mathbb R^d} \int\limits_{{\cal D}{j}} | \nabla_\xi f_{j}|^2
(x,\xi)\, d\xi dx + m\|F\|^2_{L^2(E,\alpha)}
\end{equation}
with a domain $D(\Gamma)=H^1_c(E,\alpha)$.
%\begin{equation}\label{DGamma}
%\begin{array}{c}
%D(\Gamma) = \left\{f_0 \in H^1(\mathbb R^d), \  f_{j}-f_0\in L^2(\mathbb R^d; H_0^1({\cal D}_{j}))
%%,\    f_{j}(x,\xi)\Big|_{\partial {\cal D}_{j}} = f_0 (x),
%\right.\\
%\left.  \sum_{j\geq1} \alpha_{ j} \int\limits_{\mathbb R^d} \int\limits_{{\cal D}{j}} | \nabla_\xi f_{j}|^2
%(x,\xi)\, d\xi dx<\infty \right\}.
%\end{array}
%\end{equation}
Notice that $ f_{j}(x,\cdot)\big|_{\partial {\cal D}_{j}} = f_0 (x)$ for any $F\in D(\Gamma)$.
According to \cite[Theorem x.x]{RS} there exists a unique self-adjoint operator $\tilde A_m$  that has the following properties: \\
- its domain $D(\tilde A_m)$ is dense in $L^2(E, \alpha)$; \\
- $D(\tilde A_m)$ belongs to $D(\Gamma)$; \\
- $(\tilde A_m F,F)_{L^2(E,\alpha)}=\Gamma(F,F)$ for any $F\in D(\tilde A_m)$.

We are going to show that $\tilde A_m$ coincides with $m-A$.
First we prove that
$D(\tilde A_m)\subset\hat D(A)$. Separating the first component $f_0$ in \eqref{vectorF} we will use the notation $F=(f_0, V)$. Taking $F\in D(\tilde A_m)$ and $U=(0,U_1)\in D(\Gamma)$ with $U_1\in C_0^\infty(\mathbb R^d\,;\,
C_0^\infty({\cal D}))$, and using the relation $(\tilde A_mF,U)_{L^2(E,\alpha)}=\Gamma(F,U)$, we obtain
$$
(\tilde A_mF,U)_{L^2(E,\alpha)}=\Gamma(F,U)=\sum\limits_{j\geq 1}\alpha_j((m-\Delta_\xi)V_j,U_{1,j}),
$$
where the terms $(-\Delta_\xi V_j, U_{1,j})$ on the right-hand side are understood as a pairing between $L^2(\mathbb R^d\,;\,H^{-1}({\cal D}_j))$ and $L^2(\mathbb R^d\,;\,H^1_0({\cal D}_j))$.
%related to
This implies that $(m-\Delta_\xi)V_j\in L^2(\mathbb R^d\times {\cal D}_j)$
and $\big(0, \{(m-\Delta_\xi)V_j\}_{j\geq 1}\big)\in L^2(E,\alpha)$.
Therefore, $(0,V)\in H^2_c(E,\alpha)$.  Choosing now $U=(u_0(x),0)$ with $u_0\in C_0^\infty(\mathbb R^d)$
and considering the fact that $  \sum\limits_{j\geq1}  \alpha_{ j} \int\limits_{ {\cal D}_{j} }  \triangle_\xi V_{j} (\cdot, \xi) d \xi
\in L^2(\mathbb R^d)$,
we get $mf_0-\mathrm{div}(\Theta\nabla f_0)\in L^2(\mathbb R^d)$. Therefore, $f_0\in H^2(\mathbb R^d)$,
and $D(\tilde A_m)\subset\hat D(A)$.

Moreover, $\tilde A_mF=(m-A)F$ for any $F\in D(\tilde A_m)$.
Since $\tilde A_m$ is self-adjoint, $D(\tilde A_m)=\hat D(A)$. This yields the desired statement.
\end{proof}

We define the following set of functions:
\begin{equation}\label{CoreA}
D_A = \big\{ f_0(x) \in C_0^{\infty}(\mathbb R^d), \; f_{j} (x,\xi)-f_0(x) \in C_0^{\infty}(\mathbb R^d, {\mathbb D}_{j}(\Delta))
%, \;  \; f_{j}(x,\xi)\big|_{\xi \in \partial {\cal D}_{j}} = f_0(x)
  \big\}.
\end{equation}
  Notice that $f_{j}(x,\xi)\big|_{\xi \in \partial {\cal D}_{j}} = f_0(x)$ for any $F=\{f_j\}_{j\geq 0}\in D_A$.

\begin{corollary}\label{cor1}
The set  $D_A \subset L^2(E,\alpha)$
defined in \eqref{CoreA} is a core of $A$, i.e.  $D_A$ is a dense subset of $L^2(E,\alpha)$ and $A = \overline{A \big |_{D_A}}$, see \cite{EK} for the details.
\end{corollary}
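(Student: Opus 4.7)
The plan is to verify two statements: (i) $D_A$ is dense in $L^2(E,\alpha)$, and (ii) $D_A$ is dense in $\hat D(A)$ with respect to the graph norm of $A$. Since by Lemma \ref{LC} the operator $m-A$ is self-adjoint (hence closed) for any $m>0$, $A$ itself is closed, and (ii) is equivalent to $A=\overline{A\big|_{D_A}}$. Both (i) and (ii) reduce to separate approximation arguments for the base component $f_0$ and for the ``inclusion fluctuations'' $g_j := f_j - f_0$.

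For (i), given $F=(f_0,\{f_j\}_{j\ge 1})\in L^2(E,\alpha)$, one approximates $f_0$ in $L^2(\mathbb R^d)$ by $C_0^\infty(\mathbb R^d)$ functions, and each $g_j\in L^2(\mathbb R^d\times \mathcal D_j)$ by finite sums $\sum_k\phi_k(x)\psi_k(\xi)$ with $\phi_k\in C_0^\infty(\mathbb R^d)$ and $\psi_k\in \mathbb D_j(\Delta)$; density of such sums follows from density of $\mathbb D_j(\Delta)$ in $L^2(\mathcal D_j)$ combined with the standard tensor decomposition of $L^2$.

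For (ii), which is the substantive part, take $F\in\hat D(A)$ and write $f_j=g_j+f_0$ with $g_j\in L^2(\mathbb R^d;\mathbb D_j(\Delta))$. Construct $f_0^{(n)}\in C_0^\infty(\mathbb R^d)$ with $f_0^{(n)}\to f_0$ in $H^2(\mathbb R^d)$ via standard truncation and mollification, and $g_j^{(n)}\in C_0^\infty(\mathbb R^d;\mathbb D_j(\Delta))$ with $g_j^{(n)}\to g_j$ in $L^2(\mathbb R^d;\mathbb D_j(\Delta))$ by mollifying in $x$ with a compact-support cutoff. Setting $F^{(n)}=(f_0^{(n)},\{f_0^{(n)}+g_j^{(n)}\}_{j\ge 1})$, membership $F^{(n)}\in D_A$ is automatic: the trace $(f_0^{(n)}+g_j^{(n)})\big|_{\xi\in\partial\mathcal D_j}=f_0^{(n)}$ holds because $g_j^{(n)}(x,\cdot)\in H^1_0(\mathcal D_j)$. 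The convergence $AF^{(n)}\to AF$ then splits cleanly: $\Delta_\xi f_j^{(n)}=\Delta_\xi g_j^{(n)}\to \Delta_\xi g_j=\Delta_\xi f_j$ in $L^2(\mathbb R^d\times \mathcal D_j)$ by construction; $\Theta\cdot\nabla\nabla f_0^{(n)}\to \Theta\cdot\nabla\nabla f_0$ in $L^2(\mathbb R^d)$ by $H^2$-convergence; and the coupling term $\Upsilon^{(n)}(x)=-\alpha_0^{-1}\sum_{j\ge 1}\alpha_j\int_{\mathcal D_j}\Delta_\xi g_j^{(n)}(x,\xi)\,d\xi$ converges to $\Upsilon(x)$ in $L^2(\mathbb R^d)$ by Cauchy--Schwarz applied to the preceding convergence (since $|\mathcal D_j|<\infty$ and the number of reference domains is finite).

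The only conceptual obstacle is to choose an approximating family that preserves the interface coupling $f_j|_{\partial\mathcal D_j}=f_0$ at every stage. This is precisely what forces the decomposition $f_j=f_0+g_j$ and the independent mollification of the two pieces; smoothing the $f_j$'s directly, or mollifying $f_0$ and $f_j$ with different parameters, would in general destroy either the trace condition or the required $H^2(\mathbb R^d)$-convergence of $f_0^{(n)}$. Once the decomposition is in place, the core property follows from a standard $H^2(\mathbb R^d)$ density argument for the base component together with a Bochner $L^2$ density argument for the inclusion fluctuations, with no further delicate estimates.
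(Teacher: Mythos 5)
Your proof is correct, and it takes a genuinely different route from the paper's. You establish the core property directly as graph-norm density: since $A$ is closed (by the self-adjointness in Lemma \ref{LC}, as you note), $A=\overline{A\big|_{D_A}}$ amounts to approximating every $F\in\hat D(A)$ in the norm $\|F\|_{L^2(E,\alpha)}+\|AF\|_{L^2(E,\alpha)}$, and the decomposition $f_j=f_0+g_j$ with $g_j\in L^2(\mathbb R^d;\mathbb D_j(\Delta))$ decouples this into two routine density facts --- $C_0^\infty(\mathbb R^d)$ is dense in $H^2(\mathbb R^d)$, and $C_0^\infty(\mathbb R^d;\mathbb D_j(\Delta))$ is dense in the Bochner space $L^2(\mathbb R^d;\mathbb D_j(\Delta))$ --- while the coupling term \eqref{Y} is controlled by Cauchy--Schwarz and the finiteness of $N$. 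The paper instead verifies the range criterion: $D_A$ is a core once $\{(m-A)F,\ F\in D_A\}$ is dense in $L^2(E,\alpha)$ for some $m>0$. It takes $U$ in the dense set $J^\infty$, solves the resolvent equation \eqref{cor-0} (solvable by Lemma \ref{LC}), and extracts the explicit structure \eqref{corrr} of the solution, $V_j=(m-\Delta_\xi)^{-1}U_j+mf_0(m-\Delta_\xi)^{-1}1+f_0$; substituting back yields an elliptic equation for $f_0$ with coefficient $(1-c)m>0$, whence $f_0$ is of Schwartz class, and a cutoff argument concludes. Your route is more elementary and self-contained: no resolvent computation and no regularity bootstrap, only mollification, plus the structural observation that the graph norm of $A$ is dominated by the decoupled norm $\|f_0\|_{H^2(\mathbb R^d)}+\sum_j\|f_j-f_0\|_{L^2(\mathbb R^d;\mathbb D_j(\Delta))}$ (this is exactly where your trace-preservation remark enters, and it is the same point the paper encodes in the definition \eqref{DAL2eff}). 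What the paper's longer computation buys is the explicit diagonalization of the resolvent in the $\xi$-variable --- the same structure that reappears in the spectral analysis of Lemma \ref{spectrum} --- and a verification phrased in precisely the range form native to the Ethier--Kurtz framework used for the semigroup convergence; both arguments, however, rest on Lemma \ref{LC} at the decisive step.
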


\begin{proof}
Clearly,  $D_A$ is a dense subset in $L^2(E,\alpha)$.
In order to show that $D_A$ is a core of $A$ we should also check that for some $m>0$ the set $\{(m-A)F,\ F\in D_A\}$  is dense
in $L^2(E,\alpha)$. Denote $J^\infty=\{(u_0,U)=(u_0(x), U_{j}(x,\xi))\,:\,u_0\in C_0^\infty(\mathbb R^d), \
U_{j}\in C_0^\infty(\mathbb R^d\,;\,C_0^\infty({\cal D}_j))\}$.  Observe that $J^\infty$ is dense in $L^2(E,\alpha)$. By Lemma \ref{LC} for an arbitrary $U\in J^\infty$ and for any $m>0$ the equation
\begin{equation}\label{cor-0}
mF-AF=U
\end{equation}
has a unique solution $F = (f_0,\, V) \in \hat D(A)$. Then the equation for $V$ can be rewritten as
\begin{equation}\label{cor-1}
(m - \Delta_\xi) (V(x,\xi)-f_0(x))=U(x,\xi)-mf_0(x)\ \ \hbox{in }\mathcal{D}, \quad (V(x,\xi)-f_0(x))\big|_{\xi \in \partial {\cal D}} = 0,
\end{equation}
%Since $V(x, \xi)$ is a vector function:
%$$
%V(x, \xi) = \big\{ V_{k, j}(x,\xi), \quad x \in \mathbb R^d, \; \xi \in {\cal D}^k_{j} \big\}, \quad  k=1, \ldots, M; \;  = 1, \ldots, N(k),
%$$
%we can consider equation \eqref{cor-1} for any component $ V_{k, j}$ of $V$ separately. Then $ V_{k, j}$ can be written as a
%solutions of the following problem:
or, in the coordinate form,
\begin{equation}\label{cor-2}
(m - \Delta_\xi) (V_j(x,\xi)-f_0(x))=U_j(x,\xi)-mf_0(x)\ \ \hbox{in }\mathcal{D}_j, \quad (V_j(x,\xi)-f_0(x))\big|_{\xi \in \partial {\cal D}_j} = 0,\ \ j\geq 1.
\end{equation}
From this equation we derive the following relation:
\begin{equation}\label{corrr}
V_{ j}(x,\xi)=V^I_{j}(x,\xi)+mf_0(x)V^0_{j}(\xi)+f_0(x)
\end{equation}
with $V^I_{j}=(m-\Delta_\xi)^{-1}U_j\in C_0^\infty(\mathbb R^d;\; {\mathbb D}_j(\Delta))$ and
$V^0_{j}=(m-\Delta_\xi)^{-1}1\in {\mathbb D}_j(\Delta)$.

Substituting  the right-hand side of \eqref{corrr} for $V$ into the first equation in \eqref{cor-0} yields
$$
m f_0 - \Theta \cdot \nabla\nabla f_0 - cm f_0 = w_0, %\quad \mbox{where } \; w_0 \in C^{\infty}_0 (\mathbb R^d)\ \ \hbox{and }c<1.
$$
where
$$
w_0=u_0-\sum\limits_{j\geq1}\alpha_j\int\limits_{\mathcal{D}_j}\Delta_\xi(m-\Delta_\xi)^{-1}U_j(\cdot,\xi)\,d\xi,
\quad
c=-\sum\limits_{j\geq1}\alpha_j\int\limits_{\mathcal{D}_j}\Delta_\xi(m-\Delta_\xi)^{-1}1\,d\xi.
$$
Under our assumptions on $U$ we have $w_0\in C_0^\infty(\mathbb R^d)$. Also, it is straightforward to check that
$c<1$ for any $m>0$.
%The function $\hat V(x,\xi)=V(x,\xi)-f_0(x)$ is a solution of the Dirichlet problem
%$$
%(m-\Delta_y) \hat V(x,\xi)=U_1-mf_0,\ \hat V\big|_{\partial {\cal D}}=0.
%$$
%Therefore, due to the properties of the Laplace operator,
%$\hat V\in C_0^\infty(\mathbb R^d\,;\,C^\infty({\cal D}))$
%and $\hat V(x,\xi)=0$ for $\xi \in\partial {\cal D}$. Then for any $k, j(k)$ the integral $\int_{{\cal D}}\Delta_\xi \hat %V_k^{j(k)}(x,\xi)\,d\xi$ is a
%$C_0^\infty$-function and,
Consequently, $f_0$ is a Schwartz class function in $\mathbb R^d$. Taking a proper sequence of  smooth
cut-off functions $\varphi_n$   we conclude that $(m-A)(\varphi_nf_0,  V+\varphi_nf_0)$ converges in $L^2(E,\alpha)$ to $U$.
Since $(\varphi_nf_0,  V+\varphi_nf_0)\in D_A$,   this yields the desired statement.
\end{proof}

\section{The semigroup convergence.}

Applying the Hille-Yosida theorem, we conclude that $A$ is a generator of a strongly continuous, positive, contraction semigroup $T(t)$  on $L^2(E, \alpha)$.

Define a bounded linear transformation   $\pi_\eps^\omega: \, L^2(E, \alpha) \to L^2(\mathbb{R}^d)$  for every $\varepsilon \in (0,1)$ and every $\omega \in \Omega$ as follows:
\begin{equation}\label{pi_L2}
(\pi^\omega_\varepsilon F) (x) \ = \ \left\{
\begin{array}{ll}
f_0 (x), \; & \mbox{if} \; x \in \mathbb R^d \backslash G_\varepsilon^\omega; \\[2mm]
%\displaystyle
%f_1 (x, \frac{x}{\varepsilon}), \; & \mbox{if} \; x \in \varepsilon {\cal D}^{1,\omega}, \\[2mm]
%\ldots & \\[2mm]
\hat f_{j}( \hat x_j^{\omega,i},\, \frac{x - \hat x_j^{\omega,i}}{\varepsilon}), \; & \mbox{if} \; x \in \varepsilon  {\cal G}^{\omega,i}_{j}, %\;\; \mbox{for every } \ j,
%\\[2mm] \ldots
\end{array}
\right.
\end{equation}
where $\eps^{-1}\hat x^{\omega,i}_j$ is the vector that defines the translation which maps  $\mathcal{D}_j $ to
  $ {\cal G}^{\omega,i}_{j}$,
%where $\hat x^\omega$ is the edge point of  ${\cal D}^{k,\omega}_{j(k)}$
and
\begin{equation}\label{def_fmean}
\hat f_j(\hat x^{\omega,i}_j, \xi) = \frac{1}{\varepsilon^d | {\cal D}_{j} |} \int_{\varepsilon  {\cal D}{j}}
 f_{j}(\hat x^{\omega,i}_j + \eta, \xi ) \, d \eta.
\end{equation}

\begin{lemma}\label{pi}
Almost surely the linear operators $\pi_\varepsilon^\omega$ are uniformly bounded in the operator norm for all $\varepsilon \in (0,1)$, that
is
\begin{equation}\label{proj_bouu}
\|\pi_\varepsilon^\omega F\|_{L^2(\mathbb R^d)}\leq C \|F\|_{L^2(E, \alpha)}
\end{equation}
for any $F \in L^2(E, \alpha)$; the constant $C$ is deterministic and does not depend on $\eps$.
%and all $\omega$
Moreover, for each $F \in L^2(E, \alpha)$ the following relation holds a.e.
\begin{equation}\label{conv-pi}
\| \pi^\omega_\varepsilon F ||^2_{L^2(\mathbb{R}^d)} \ \to \ \|F \|^2_{L^2(E, \alpha)} \qquad \mbox{as  } \; \varepsilon \to 0.
\end{equation}
\end{lemma}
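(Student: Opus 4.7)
My plan is to establish the uniform bound \eqref{proj_bouu} by a direct computation using Jensen's inequality, and the convergence \eqref{conv-pi} by Birkhoff's ergodic theorem applied first to a dense subclass of functions and then extended to all of $L^2(E,\alpha)$ via the uniform bound.

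\textbf{Uniform bound.} I would decompose $\|\pi_\varepsilon^\omega F\|^2_{L^2(\mathbb R^d)}$ according to the partition $\mathbb R^d=(G_\varepsilon^\omega)^c\cup\bigcup_{j,i}\varepsilon\mathcal G_j^{\omega,i}$ induced by \eqref{pi_L2}. The contribution on the complement is trivially bounded by $\|f_0\|_{L^2(\mathbb R^d)}^2=\alpha_0^{-1}\cdot\alpha_0\|f_0\|^2$. On each inclusion, the change of variable $\xi=(x-\hat x_j^{\omega,i})/\varepsilon$ combined with Jensen's inequality applied to the average \eqref{def_fmean} gives, after summation over $i$ (using that the sets $\hat x_j^{\omega,i}+\varepsilon\mathcal D_j$ coincide with the pairwise disjoint inclusions $\varepsilon\mathcal G_j^{\omega,i}$), a bound by $|\mathcal D_j|^{-1}\|f_j\|_{L^2(\mathbb R^d\times\mathcal D_j)}^2=(\alpha_j^o)^{-1}\alpha_j\|f_j\|^2$. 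Since $N$ is finite and $\alpha_0,\alpha_j^o>0$, taking $C=\max(\alpha_0^{-1},\max_j(\alpha_j^o)^{-1})$ yields \eqref{proj_bouu} with a deterministic constant.

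\textbf{Convergence.} Using \eqref{proj_bouu} and the density of the core $D_A$ from Corollary \ref{cor1}, a standard $\varepsilon/3$ argument reduces \eqref{conv-pi} to $F\in D_A$, so both $f_0$ and each $f_j(\cdot,\xi)$ are smooth and compactly supported in the $x$ variable. For the complement contribution, Birkhoff's ergodic theorem applied to the stationary ergodic $L^\infty$ indicator $\mathbf 1_{(G^\omega)^c}(\cdot/\varepsilon)$ (mean $\alpha_0$) tested against the compactly supported weight $|f_0|^2$ yields a.s.\ convergence to $\alpha_0\int|f_0|^2$. For the inclusion contribution, after the same change of variable as above, I would first replace $\hat f_j$ by $f_j$ at the centre $\hat x_j^{\omega,i}$, incurring an $O(\varepsilon)$ error from the smoothness of $f_j$ in $x$ and the $O(\varepsilon^{-d})$ cardinality of the nonzero terms. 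The remaining sum is $\varepsilon^d\sum_i g_j(\hat x_j^{\omega,i})$ with $g_j(x):=\int_{\mathcal D_j}|f_j(x,\xi)|^2\,d\xi$ continuous and compactly supported, and its limit $\alpha_j\int g_j$ is precisely the content of the ergodic theorem for the stationary ergodic point process $\{\varepsilon^{-1}\hat x_j^{\omega,i}\}_i$ of intensity $\alpha_j=\alpha_j^o/|\mathcal D_j|$, which inherits its stationarity and ergodicity from $G^\omega$. Summing the contributions over $j$ and combining with the complement term delivers \eqref{conv-pi}.

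\textbf{Main obstacle.} The technical heart of the argument is the ergodic theorem for the point process of inclusion centres with the correct intensity $\alpha_j$, which must be secured on a single full-measure set in $\omega$ that simultaneously serves all $j=1,\ldots,N$ and a countable dense family of test functions (the separability of $C_0(\mathbb R^d)$ handles this step routinely). Identifying the intensity itself relies on the elementary identity $\alpha_j|\mathcal D_j|=\alpha_j^o$ together with the deterministic lower bound on the distance between components of $G^\omega$, which prevents accumulation of centres. The extension from $D_A$ to the whole of $L^2(E,\alpha)$ by density, and the reduction of the $\hat f_j$ factors to $f_j$, are then routine steps that rest on the uniform bound obtained in the first part.
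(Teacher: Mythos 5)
Your proof is correct, and its skeleton coincides with the paper's: Jensen's inequality plus disjointness of the inclusions for the uniform bound (your first part is essentially verbatim the paper's computation \eqref{pi-2}, with the same constant built from $\alpha_0^{-1}$ and $(\alpha_j^o)^{-1}$), followed by a dense subclass, an ergodic theorem, and a density extension via the uniform bound for the convergence. The one genuine difference lies in the choice of dense class for the second part. The paper takes $\mathcal{E}$, the functions piecewise constant and compactly supported in $x$, so that Birkhoff's theorem applies directly to the indicators $\chi_{\mathcal{G}_j^{\omega}}(x/\varepsilon)$ integrated against constants; in particular no comparison between $\hat f_j$ and $f_j$ is ever needed, and the proof is two lines. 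You instead take the smooth core $D_A$, which forces two extra steps: the $O(\varepsilon)$ replacement of $\hat f_j$ by $f_j$ (this is exactly relation \eqref{pi-D}, which the paper only states later, in the proof of Theorem \ref{l_Fn}), and an ergodic theorem for the stationary point process of inclusion centres, including the identification of its intensity as $\alpha_j=\alpha_j^o/|\mathcal{D}_j|$. The two ergodic inputs are equivalent in content --- counting type-$j$ centres in a region determines the volume fraction of $\varepsilon\mathcal{G}_j^\omega$ up to boundary terms and vice versa --- so neither route is stronger; yours costs the replacement step and the intensity identification (both of which you correctly flag and which do go through, since $f_j$ is smooth and compactly supported in $x$ for $F\in D_A$), while the paper's choice of piecewise-constant data makes the averaging in \eqref{def_fmean} act trivially and keeps the argument shorter.
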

\begin{proof}
For every $x \in \mathbb{R}^d$ and every $\omega$  we have
$$
  \sum_{j\geq 0} \chi\big._{{\cal G}^{\omega}_{j}} (x) \ = \ 1,
$$
where %$\chi_D$ is the characteristic function of $D \subset \mathbb{R}^d$ and
$ {\cal G}_0^\omega = {\mathbb{R}^d \backslash G^\omega_\varepsilon}$. Then we get
\begin{equation}\label{pi-2}
\begin{array}{l} \displaystyle
\| \pi^\omega_\varepsilon F \|^2_{L^2(\mathbb{R}^d)} = \int\limits_{\mathbb{R}^d} \big(\pi^\omega_\varepsilon F (x)\big)^2
%, \frac{x}{\varepsilon})
 dx = \sum\limits_{j\geq 0}\ \int\limits_{\mathbb{R}^d} \big(\pi^\omega_\varepsilon F (x
 %, \frac{x}{\varepsilon}
 )\big)^2 \, \chi\big._{{\cal G}^{\omega}_j} (\frac{x}{\varepsilon}) dx
\\[2mm]
\displaystyle
=  \int\limits_{\mathbb{R}^d} f_0^2(x) \, \chi\big._{{\cal G}_0^\omega} (\frac{x}{\varepsilon}) dx  + \sum\limits_{j\geq1}
 \sum\limits_{i} \ \int\limits_{\mathbb{R}^d} \big(\hat f_j (\eps\hat x_{j}^{\omega,i}, \frac{x}{\varepsilon}-
 \hat x^{\omega,i}_{j})\big)^2 \,
 \chi_{{\cal G}^{\omega,i}_{j}} (\frac{x}{\varepsilon}) dx.
\end{array}
\end{equation}
By the Jenssen inequality and the definition of $\hat f_j$ in \eqref{def_fmean} this implies that
$$
\| \pi^\omega_\varepsilon F \|^2_{L^2(\mathbb{R}^d)}
\le \int\limits_{\mathbb{R}^d} f_0^2(x) \, dx  + \sum_{ j>0} \frac{1}{|{\cal D}^{j} |} \int\limits_{\mathbb{R}^d} \int\limits_{{\cal D}_{j}} \big(f_j (x, \xi)\big)^2 \, d \xi \, dx\leq \check C \|F \|_{L^2(E,\alpha)}
$$
with $\check C=\max\limits_j\{(\alpha_j^o)^{-1}\}$. This yields \eqref{proj_bouu}.

We turn to \eqref{conv-pi} and  consider the set of functions in $L^2(E,\alpha)$ which are piece-wise constant and compactly supported
with respect to the first variable $x$.  We denote this set by  ${\cal E}$ and notice that it is dense in $L^2(E,\alpha)$. If $F \in {\cal E}$ then \eqref{conv-pi} holds by the Birkhoff ergodic  theorem. %each integral in \eqref{pi-2} converges as $\varepsilon \to 0$ to $\alpha_0 \,
%\int_{\mathbb{R}^d} f_0^2(x) \, dx $   or   $ \alpha_{k, j(k)}\, \int_{\mathbb{R}^d} \int_{{\cal D}^{k}_{j(k)}} (f_k^{j(k)})^2 (x, \xi) \,
%d\xi \, dx$ correspondingly.
Then, taking into account \eqref{proj_bouu} we conclude that \eqref{conv-pi} holds for any $F \in L^2(E, \alpha)$.
\end{proof}

%\begin{corollary}
%The transformations $\pi_\varepsilon^\omega$ are uniformly bounded for all $\varepsilon \in (0,1)$ and all $\omega$. Indeed,
%$$
%\begin{array}{l} \displaystyle
%\| \pi^\omega_\varepsilon F \|^2_{L^2(\mathbb{R}^d)} = \int\limits_{\mathbb{R}^d \backslash G_\varepsilon^\omega} f_0^2(x) \, dx  +
%\sum_{k, j(k)} \int\limits_{\mathbb{R}^d} (\hat f_k^{j(k)})^2 (\hat x^\omega, \frac{x}{\varepsilon}) \, \chi_{{\cal D}^{k,\omega}_{j(k)}}
% (\frac{x}{\varepsilon}) dx
%\\[3mm] \displaystyle
%\le \int\limits_{\mathbb{R}^d} f_0^2(x) \, dx  + \sum_{k, j(k)} \frac{1}{|{\cal D}^{k}_{j(k)} |} \int\limits_{\mathbb{R}^d}
%\int\limits_{{\cal D}^{k}_{j(k)}}  (f_k^{j(k)})^2 (x, \xi) \, d \xi \, dx.
%\end{array}
%$$
%We used here the inequality:
%$$
%\int\limits_{\varepsilon {\cal D}^{k}_{j(k)}}  (\hat f_k^{j(k)})^2 (\hat x^\omega, \frac{x}{\varepsilon}) \, dx \le
%\frac{1}{|{\cal D}^{k}_{j(k)} |} \int\limits_{\varepsilon {\cal D}^{k}_{j(k)}} \int\limits_{{\cal D}^{k}_{j(k)}}
%(f_k^{j(k)})^2 (x, \xi) \, d \xi \, dx.
%$$
%\end{corollary}
%\medskip

\medskip
Now we are ready to formulate the main result of the work.

\begin{theorem}[Main theorem]\label{T1_bisL2}
For every $F \in L^2(E, \alpha)$ a.e.
\begin{equation}\label{MainTheorem}
T^\omega_\varepsilon (t) \pi^\omega_\varepsilon F \to \ T(t) F, \quad \mbox{ i.e. } \; \| T^\omega_\varepsilon (t) \pi^\omega_\varepsilon F - \pi^\omega_\varepsilon \ T(t) F\|_{L^2(\mathbb R^d)} \to 0    \quad \mbox{for all} \quad t \ge 0
\end{equation}
as $\varepsilon \to 0$.
\end{theorem}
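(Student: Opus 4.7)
The plan is to apply the Trotter--Kurtz type semigroup approximation theorem for semigroups defined on distinct Banach spaces and connected by bounded linear operators (Ethier--Kurtz \cite{EK}, Chapter~1). Lemma~\ref{pi} supplies the uniform boundedness and the norm consistency $\|\pi^\omega_\varepsilon F\|\to\|F\|$ of the connecting maps, and Corollary~\ref{cor1} identifies $D_A$ as a core of $A$. The theorem then reduces the main statement to the following claim: almost surely, for every $F\in D_A$ there is a family $F_\varepsilon\in D(A^\omega_\varepsilon)$ such that
\begin{equation*}
\|F_\varepsilon-\pi^\omega_\varepsilon F\|_{L^2(\mathbb R^d)}\ \to\ 0,\qquad \|A^\omega_\varepsilon F_\varepsilon-\pi^\omega_\varepsilon(AF)\|_{L^2(\mathbb R^d)}\ \to\ 0
\end{equation*}
as $\varepsilon\to 0$.

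\textbf{Two-scale construction of $F_\varepsilon$.} For $F=(f_0,f_1,\ldots,f_N)\in D_A$ I would build $F_\varepsilon$ by a two-scale ansatz. Inside each rescaled inclusion $\varepsilon\mathcal{G}^{\omega,i}_j$ set
\begin{equation*}
F_\varepsilon(x)\ =\ f_j\bigl(\hat x^{\omega,i}_j,\,(x-\hat x^{\omega,i}_j)/\varepsilon\bigr),
\end{equation*}
so that $\xi=(x-\hat x^{\omega,i}_j)/\varepsilon$ varies over $\mathcal{D}_j$. On the matrix $\mathbb R^d\setminus\varepsilon G^\omega$ take
\begin{equation*}
F_\varepsilon(x)\ =\ f_0(x)+\varepsilon\,\chi^\omega_k(x/\varepsilon)\,\partial_k f_0(x)+\varepsilon^2\,\psi^\omega_\varepsilon(x),
\end{equation*}
where $\chi^\omega=(\chi^\omega_1,\ldots,\chi^\omega_d)$ is the first random corrector for the Laplacian on the matrix (harmonic in $\mathbb R^d\setminus G^\omega$, sublinear at infinity, and satisfying $(e_k+\nabla\chi^\omega_k)\cdot n=0$ on $\partial G^\omega$), see \cite{JKO}, and $\psi^\omega_\varepsilon$ is a second corrector discussed below. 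The identity $f_j(x,\xi)|_{\xi\in\partial\mathcal{D}_j}=f_0(x)$ from the definition of $D_A$ produces value continuity of $F_\varepsilon$ across each interface $\partial\varepsilon\mathcal{G}^{\omega,i}_j$ up to the $O(\varepsilon)$ error coming from the $x$-variation of $f_j$. A small boundary-layer adjustment of $\psi^\omega_\varepsilon$ near each interface, combined with a standard cut-off at infinity, yields $F_\varepsilon\in D(A^\omega_\varepsilon)$ without disturbing either of the desired $L^2$-limits.

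\textbf{Action of the generator.} Inside each inclusion, $A^\omega_\varepsilon F_\varepsilon=\varepsilon^2\Delta_x F_\varepsilon=\Delta_\xi f_j(\hat x^{\omega,i}_j,\xi)$, which coincides with the $j$-th component of $\pi^\omega_\varepsilon(AF)$ up to the cell-averaging error already built into $\pi^\omega_\varepsilon$. On the matrix, expanding $\Delta_x$ in the ansatz and using harmonicity of $\chi^\omega_k$ eliminates the would-be $O(\varepsilon^{-1})$ term. Birkhoff's ergodic theorem applied to the stationary gradient field $\nabla\chi^\omega$ then converts the quadratic combinations $(\delta_{ij}+\partial_j\chi^\omega_i)(\delta_{kj}+\partial_j\chi^\omega_k)$ into the effective tensor $\Theta$ when tested against $\nabla^2 f_0$, delivering the first component $\Theta\cdot\nabla\nabla f_0$ of $AF$. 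The second corrector $\psi^\omega_\varepsilon$ is chosen so that $\varepsilon^2\Delta\psi^\omega_\varepsilon$ reproduces the coupling term
\begin{equation*}
\Upsilon(x)\ =\ -\frac{1}{\alpha_0}\sum_{j\geq 1}\alpha_j\int_{\mathcal{D}_j}\Delta_\xi f_j(x,\xi)\,d\xi,
\end{equation*}
which is precisely the net influx from the rescaled inclusions into the matrix, prescribed on the surfaces $\partial\varepsilon\mathcal{G}^{\omega,i}_j$ by the inside data $\nabla_\xi f_j\cdot n^-$ evaluated at $\hat x^{\omega,i}_j$.

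\textbf{Main obstacle.} The genuinely delicate step is the construction and $L^2$-control of $\psi^\omega_\varepsilon$. In the periodic case one solves a Fredholm cell problem, but in a random environment the corresponding stationary auxiliary problem is not Fredholm, as the authors explicitly emphasize in the introduction. I would therefore localize, approximate by a problem on a large ball with source concentrated on the inclusion boundaries, and extract a strong $L^2$-limit by combining an energy estimate with a stationarity/ergodicity argument in the spirit of \cite{JKO}. A secondary but nontrivial technicality is to verify that, after all the modifications, $F_\varepsilon$ actually satisfies the flux-continuity condition in \eqref{DAL2}; this demands a careful boundary-layer analysis at each interface matching the outer flux $(\mathbb I+\nabla\chi^\omega)\nabla f_0\cdot n$ with the small inner flux $\varepsilon\nabla_\xi f_j\cdot n$, with the discrepancy absorbed into $\psi^\omega_\varepsilon$.
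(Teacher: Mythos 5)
Your strategy matches the paper's: both reduce the Main Theorem, via the Ethier--Kurtz approximation theorem together with Lemma \ref{pi} and Corollary \ref{cor1}, to the statement of Theorem \ref{l_Fn} (existence of $F^\omega_\varepsilon\in D(A^\omega_\varepsilon)$ with $\|F^\omega_\varepsilon-\pi^\omega_\varepsilon F\|\to0$ and $\|A^\omega_\varepsilon F^\omega_\varepsilon-\pi^\omega_\varepsilon AF\|\to0$), and both build $F^\omega_\varepsilon$ by a two-scale corrector ansatz: your $\chi^\omega$ is the paper's first corrector $h$, your $\Upsilon$-corrector is the paper's $q$ (Proposition \ref{prop_Phi}), and your boundary-layer adjustments play the role of the paper's $\phi^\omega_j$ (Proposition \ref{p_existence_phi}).

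There is, however, a genuine gap at the point where $\Theta\cdot\nabla\nabla f_0$ is supposed to appear. You obtain it from Birkhoff's ergodic theorem ``when tested against $\nabla^2 f_0$'', i.e.\ by a weak-convergence argument. But condition (b) of the Ethier--Kurtz theorem demands \emph{norm} convergence $\|A^\omega_\varepsilon F_\varepsilon-\pi^\omega_\varepsilon AF\|_{L^2(\mathbb R^d)}\to0$. With your outer ansatz $f_0+\varepsilon\chi^\omega\cdot\nabla f_0+\varepsilon^2\psi^\omega_\varepsilon$, where $\varepsilon^2\Delta\psi^\omega_\varepsilon$ is reserved for $\Upsilon$, the generator on the matrix part is $\Delta f_0(x)+2\nabla_\xi\chi^\omega(x/\varepsilon)\cdot\nabla\nabla f_0(x)+\Upsilon(x)$ up to small terms, and its discrepancy with $\Theta\cdot\nabla\nabla f_0+\Upsilon$ is a stationary, order-one oscillating field contracted with $\nabla\nabla f_0$: it converges to zero only weakly, while its $L^2(B_0)$ norm stays bounded away from zero (its variance is positive in any genuinely heterogeneous medium). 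So the required strong convergence fails for the ansatz as written. This is exactly why the paper introduces the second matrix corrector $\varepsilon^2 g(x/\varepsilon)\cdot\nabla\nabla f_0$, with $g$ solving problem \eqref{g-omega-V}: the bulk equation makes $\Delta_\xi g$ cancel the oscillating discrepancy \emph{identically}, the Neumann data $-h\otimes n^-$ on the inclusion boundaries supplies the order-$\varepsilon$ flux matching (which your $\psi^\omega_\varepsilon$ alone cannot absorb without in effect reconstructing $g$), and the nontrivial content of Lemma \ref{l_subqua_g} (Appendix 1: energy estimates, extension operators in random perforated domains, compactness) is that $\varepsilon^2 g(\cdot/\varepsilon)\to0$ in $H^1$, so adding this term does not destroy the first limit in \eqref{F1}. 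This higher-order corrector in the non-Fredholm random setting is precisely the difficulty the introduction flags, and it is the step your proposal passes over.
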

The proof of the semigroup convergence in \eqref{MainTheorem} relies on the following approximation theorem \cite[Theorem 6.1, Ch.1]{EK}.\\[3mm]
{\bf Theorem}\, (see \cite{EK}).%\label{T6.5}
{\sl \
For $n=1,2,\ldots$, let $T_n(t)$ and $T(t)$ be strongly continuous contraction semigroups on Banach space ${\cal L}_n$ and $\cal L$,
with generators $A_n$ and $A$. Let $D$ be a core for $A$.
% let $\varepsilon_n$ be a positive number, and put $A_n = \varepsilon_n^{-1}(T_n - I)$. Assume that $\lim_{n \to \infty} \varepsilon_n = 0$. Let $\{T(t) \}$ be a strongly continuous contraction semigroup on the Banach space ${\cal L}$ with generator $A$, and let $D$ be a core for $A$. Assume that $\pi_n: {\cal L} \to {\cal L}_n$ are bounded linear transformations with $\sup_n \|\pi_n \| < \infty$.
Then the following are equivalent: \\

a) For each $f \in {\cal L} $, $T_n(t) \pi_n f \ \to \  T(t) f \; $ for all  $t \ge 0$. \\

b) For each $f \in D$, there exists $f_n \in {\cal L}_n$ for each $n \ge 1$ such that $f_n \to f$ and $A_n f_n \to Af$.
%\end{Theorem*}
}

\medskip
According to this theorem  the semigroups convergence \eqref{MainTheorem} is a consequence of the following statement:

\begin{theorem}\label{l_Fn}
Let the generators $A$ and $A^\omega_\varepsilon$ of the strongly continuous, positive, contraction semigroups $T(t)$ and $T^\omega_\varepsilon(t)$ be defined by \eqref{A} and  \eqref{Aeps}, \eqref{a}, \eqref{DAL2}, respectively, and assume that a core
$D_A \subset L^2(E,\alpha)$ for the generator $A$ is defined by (\ref{CoreA}), and that a bounded linear transformation $\pi^\omega_\varepsilon:\ L^2(E,\alpha) \to L^2(\mathbb R^d)$ is defined by (\ref{pi_L2}) for every $\eps \in (0,1)$.

Then there exists a positive definite symmetric constant matrix $\Theta$ such that  a.s. for every $F \in D_A$, there exists $F^\omega_\varepsilon \in  D(A^\omega_\varepsilon)$ such that
\begin{equation}\label{F1}
\| F^\omega_\varepsilon - \pi^\omega_\varepsilon F\|_{L^2(\mathbb R^d)} \to 0 \quad \mbox{ and } \quad \|A^\omega_\varepsilon F^\omega_\varepsilon - \pi^\omega_\varepsilon A F\|_{L^2(\mathbb R^d)} \to 0 \quad \mbox{as } \; \varepsilon \to 0.
\end{equation}
\end{theorem}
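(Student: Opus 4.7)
The strategy is a two-scale corrector construction. Let me introduce the standard first-order random corrector $\chi^\omega=(\chi_1^\omega,\ldots,\chi_d^\omega)$ associated with the homogenization of $-\Delta$ in the perforated random domain $\mathbb R^d\setminus G^\omega$: each $\chi_k^\omega$ has a stationary gradient of zero mean and satisfies, in the distributional sense,
\[
\Delta_y\chi_k^\omega=0\ \ \text{in}\ \ (G^\omega)^c,\qquad \partial_{n^-}\chi_k^\omega=-e_k\cdot n^-\ \ \text{on}\ \ \partial G^\omega;
\]
the existence of $\chi^\omega$ in the random stationary ergodic setting is classical, see \cite{JKO}. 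Extending $\chi_k^\omega$ arbitrarily (for instance harmonically) inside $G^\omega$, I define the effective matrix
\[
\Theta_{kl}=\frac{1}{\alpha_0}\,\mathbf{E}\bigl[(e_k+\nabla\chi_k^\omega)\cdot(e_l+\nabla\chi_l^\omega)\,\mathbf{1}_{(G^\omega)^c}\bigr],
\]
which is symmetric and positive definite by the variational characterization of $\chi^\omega$.

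For $F=(f_0,f_1,\ldots,f_N)\in D_A$ I propose the two-scale ansatz
\[
F_\varepsilon^\omega(x)=\begin{cases} f_0(x)+\varepsilon\,\chi^\omega(x/\varepsilon)\cdot\nabla f_0(x)+\varepsilon^2\,\Psi^\omega(x,x/\varepsilon), & x\in(G_\varepsilon^\omega)^c,\\[1mm] f_j\bigl(\hat x_j^{\omega,i},(x-\hat x_j^{\omega,i})/\varepsilon\bigr), & x\in\varepsilon\mathcal G_j^{\omega,i},\end{cases}
\]
where $\Psi^\omega(x,y)$ is a second-order corrector, stationary in $y$, whose twofold task is (i) to generate the source term $\Upsilon(x)$ in the limit fracture equation by absorbing the boundary fluxes $\partial_{n^-}f_j|_{\partial\mathcal D_j}$, and (ii) to compensate the residual flux mismatch between the two sides of $\partial G_\varepsilon^\omega$. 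Since $f_j|_{\xi\in\partial\mathcal D_j}=f_0(x)$ by the definition of $D_A$, the traces of $F_\varepsilon^\omega$ from the fracture and the inclusion sides of each $\partial(\varepsilon\mathcal G_j^{\omega,i})$ agree modulo $O(\varepsilon)$, and a localized cutoff in a boundary layer of width $\sim\varepsilon$ around each interface promotes $F_\varepsilon^\omega$ to a bona fide element of $D(A_\varepsilon^\omega)$ without spoiling the subsequent limits.

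Checking the two convergences in \eqref{F1} is then essentially an exercise in two-scale expansion. The identity $F_\varepsilon^\omega\to\pi_\varepsilon^\omega F$ in $L^2(\mathbb R^d)$ is immediate: the term $\varepsilon\chi^\omega(\cdot/\varepsilon)\cdot\nabla f_0$ tends to $0$ in $L^2(\mathbb R^d)$ by the sublinear growth of $\chi^\omega$ (Birkhoff theorem) and the compact support of $\nabla f_0$, the $\varepsilon^2\Psi^\omega$ term is of higher order, and replacing $\hat f_j$ by $f_j$ inside each inclusion costs $O(\varepsilon)$ by the $x$-smoothness of $f_j$. For the generator, on each inclusion
\[
\varepsilon^2\Delta_xF_\varepsilon^\omega(x)=(\Delta_\xi f_j)\bigl(\hat x_j^{\omega,i},(x-\hat x_j^{\omega,i})/\varepsilon\bigr),
\]
which matches $\pi_\varepsilon^\omega(\Delta_\xi f_j)$ up to the same $O(\varepsilon)$ averaging error, while on the fracture
\[
\Delta F_\varepsilon^\omega(x)=\Delta f_0(x)+2(\nabla_y\chi^\omega)(x/\varepsilon):\nabla\nabla f_0(x)+\Delta_y\Psi^\omega(x,x/\varepsilon)+O(\varepsilon),
\]
and the defining equations for $\chi^\omega$ and $\Psi^\omega$, together with the Birkhoff ergodic theorem applied to the stationary functions of $y=x/\varepsilon$, force this right-hand side to converge in $L^2((G_\varepsilon^\omega)^c)$ to $\Theta:\nabla\nabla f_0(x)+\Upsilon(x)=(AF)_0(x)$.

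The main obstacle is the construction of $\Psi^\omega$ in the random setting. In the periodic case $\Psi^\omega$ is obtained by solving a cell problem on the torus, where the solvability condition reduces to a Fredholm orthogonality that matches the algebraic definition of $\Upsilon$. In the random case the analogous equation on $\mathbb R^d$ is not of Fredholm type: one has to invert $-\Delta_y$ on stationary fields of zero mean with a source concentrated on $\partial G^\omega$, and the continuous spectrum of the generator accumulates at $0$. I would overcome this by a Papanicolaou--Varadhan-type regularization: solve $(\lambda-\Delta_y)\Psi_\lambda^\omega=\text{source}$ stationarily for $\lambda>0$, establish uniform bounds on $\nabla\Psi_\lambda^\omega$ via the ergodic theorem, and pass to the limit $\lambda\to 0$ to extract a stationary $\nabla\Psi^\omega$ of zero mean satisfying the required boundary identity on $\partial G^\omega$. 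Once such a $\Psi^\omega$ is available, the remaining estimates in \eqref{F1} reduce to routine ergodic averaging, and the theorem follows.
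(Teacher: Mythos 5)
Your proposal follows the same overall architecture as the paper's proof: the first-order corrector $h$ (your $\chi^\omega$) taken from \cite{JKO}, the effective matrix built from it, a second-order corrector whose job is to produce the $\Theta\cdot\nabla\nabla f_0$ term and to absorb the inclusion fluxes generating $\Upsilon$, and an interface layer promoting the ansatz into $D(A^\omega_\varepsilon)$. The genuine gap is at the decisive step, the second corrector. You require $\Psi^\omega(x,y)$ to be stationary in $y$ and propose to obtain it by resolvent regularization $(\lambda-\Delta_y)\Psi^\omega_\lambda=\mathrm{source}$, uniform bounds on $\nabla\Psi^\omega_\lambda$ ``via the ergodic theorem,'' and passage to $\lambda\to0$. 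This fails for two reasons. First, the data of the second-corrector problem are not stationary: the order-$\varepsilon$ flux matching on $\partial G^\omega$ that $\Psi^\omega$ must perform involves the first corrector $h$ itself, not only its (stationary) gradient --- compare the Neumann condition $\varepsilon \nabla_x g(\frac{x}{\varepsilon})\cdot n^- = -h(\frac{x}{\varepsilon})\otimes n^-$ in \eqref{g-omega-V} --- and $h$ is merely sublinear, not stationary, so there is no probability-space formulation in which your regularized equation can be posed. Second, even for genuinely stationary mean-zero data, the compatibility of means (which is exactly how $\Theta$ and $\Upsilon$ are fixed) is necessary but far from sufficient: $-\Delta$ on stationary fields has spectrum reaching down to $0$ with no gap, so the uniform-in-$\lambda$ gradient bound --- the one step you assert without justification --- generically fails, and stationary (or stationary-gradient) second correctors do not exist in general in stochastic homogenization. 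This is precisely the non-Fredholm difficulty that the paper flags in its introduction as the point where the random case departs from the periodic one; it cannot be dismissed as routine ergodic averaging.

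What the paper does instead, and what your proof needs, is to abandon stationarity of the higher-order correctors altogether: $g$ and $q$ are defined as solutions of $\varepsilon$-dependent mixed boundary value problems on the bounded set $V^\varepsilon=\varepsilon\mathcal{G}_0^\omega\cap B_0$ with Dirichlet data on $\partial B_0$ (problems \eqref{g-omega-V} and \eqref{Phi}--\eqref{bou_Phi}), so existence is immediate, and the entire burden shifts to the smallness statements $\|\varepsilon^2 g(\cdot/\varepsilon)\|_{H^1(V^\varepsilon)}\to0$ (Lemma \ref{l_subqua_g}) and those of Proposition \ref{prop_Phi}, proved by energy estimates combined with extension operators for random perforated domains, the Friedrichs inequality, Birkhoff's theorem and compensated compactness. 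Two further points. Your interface cutoff must be placed on the inclusion side of $\partial(\varepsilon\mathcal{G}_j^{\omega,i})$, where the operator carries the weight $\varepsilon^2$, as in Proposition \ref{p_existence_phi}: smoothing an $O(\varepsilon)$ trace jump across a layer of width $\sim\varepsilon$ produces second derivatives of order $\varepsilon^{-1}$, harmless under $\varepsilon^2\Delta$ but fatal to the convergence of $A^\omega_\varepsilon F^\omega_\varepsilon$ if performed on the fracture side where the coefficient is $1$. Finally, your $\Theta$ carries a prefactor $1/\alpha_0$ absent from the paper's \eqref{theta}; by the standard energy identity for the corrector the two expressions differ by exactly this factor, so you would need to reconcile your normalization with the matrix actually used in the matching with $\pi^\omega_\varepsilon AF$.
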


\begin{proof}
The proof relies on the correctors technique. % widely used in the homogenization theory.
For any $F \in D_A, \ F = (f_0, \{f_{j} \})$, where
$$
f_0(x) \in C_0^{\infty}(\mathbb R^d), \quad f_{j}(x,\xi) \in C_0^{\infty}\big( \mathbb R^d; \, C^2 \big(\overline{{\cal D}_{j}} \big) \big),
$$
with
\begin{equation}\label{Feps-bis}
f_{j}(x,  \xi)\big|_{ \xi \in \partial  {\cal D}_{j} } = f_0(x), \quad x \in \mathbb R^d, \; \; \forall \  j =1, \ldots, N,
\end{equation}
we construct  the following family of functions $F_\varepsilon^\omega$ depending on the realization $\omega$ of random environment:
\begin{equation}\label{Feps}
F^\omega_\varepsilon (x) \ = \ \left\{
\begin{array}{l}
f_0 (x) + \varepsilon (\nabla f_0 (x), h^\omega_\eps (\frac{x}{\varepsilon})) +  \varepsilon^2 (\nabla \nabla f_0 (x), g_\eps^\omega(\frac{x}{\varepsilon})) + \varepsilon^2 q^\omega_\eps (x, \frac{x}{\varepsilon}), \;\; x \in \mathbb R^d \backslash G_\varepsilon^\omega, \\[2mm]
f_1 ( x,  \frac{x}{\varepsilon}) + \varepsilon \phi^\omega_1(x,  \frac{x}{\varepsilon}), \;\; x \in \varepsilon {\cal G}_1^{\omega}, \\[2mm]
\ldots \\[2mm]
f_{N}(x, \frac{x}{\varepsilon}) + \varepsilon \phi_{N}^{\omega} (x, \frac{x}{\varepsilon}), \;\; x \in \varepsilon {\cal G}^{\omega}_{N}.
\end{array}
\right.
\end{equation}
{%\color{red}
Here  $ h^\omega_\eps(\xi), \, g_\eps^\omega(\xi), \, q^\omega_\eps (x, \xi)$ are random functions of $ \xi$ (so-called correctors) that also depend on $\eps$;
$h^\omega_\eps(\xi)$ is the random vector function whose gradient does not depend on $\eps$, $g_\eps^\omega(\xi)$ is the random matrix function. In what follows we drop both indices $\omega$ and $\eps$ when refer to these functions.}
Correctors $\phi_{j}^{\omega} (x,\xi)$ has been introduced in order to ensure
the continuity of the function $F^\omega_\varepsilon$ and the fluxes on the boundary  $\partial\big( \varepsilon {\cal G}^{\omega}_{j} \big)$ of the corresponding inclusion.

%\medskip
%The crucial step in the proof of the main theorem is the following proposition.
%
%\begin{proposition}\label{correctors}
%For a.e. $\omega$ there exist random functions $ h^\omega(\xi), \, g^\omega(\xi), \, \Phi^\omega (x, \xi)$ satisfying the following
%relations as $\varepsilon \to 0$:
%\begin{equation}\label{corh}
%\| \varepsilon h^\omega (\frac{x}{\varepsilon}) \|_{L^2_{\rm loc}(\mathbb{R}^d)} \ \to \  0,
%\end{equation}
%\begin{equation}\label{corg}
%\| \varepsilon^2 g^\omega (\frac{x}{\varepsilon}) \|_{L^2_{\rm loc}(\mathbb{R}^d)} \ \to \  0,
%\end{equation}
%\begin{equation}\label{corPhi}
%\| \varepsilon^2 \Phi^\omega (x, \frac{x}{\varepsilon}) \|_{L^2_{\rm loc}(\mathbb{R}^d)} \ \to \  0,
%\end{equation}
%such that for every $F \in D_A$ both relations in \eqref{F1} hold provided the functions $F^\omega_\varepsilon$
%is defined by \eqref{Feps}.
%\end{proposition}
%
%We will prove Proposition \ref{correctors} in Appendix and now continue the proof of Lemma \ref{l_Fn}.
%\medskip

%Proposition \ref{correctors} together with formula (\ref{Feps}) for the function $F^\omega_\varepsilon$ immediately imply
%the first convergence in (\ref{F1}).
Observe that for any $F\in D_A$ as well as for any $F \in C(E)$ and any $x \in \varepsilon {\cal G}^{\omega,i}_{j}$ we have:
\begin{equation}\label{pi-D}
(\pi^\omega_\varepsilon F) (x) = \hat f_{j}(\hat x^{\omega,i}_j, \frac{x-\hat x^{\omega,i}_{j}}{\varepsilon})= f_{j}(x, \frac{x-\hat x^{\omega,i}_{j}}{\varepsilon}) + O(\varepsilon),
\end{equation}
where the $L^\infty$ norm of $O(\eps)$ does not exceed $C\eps$.
Our goal is to choose the correctors in such a way that the function $F^\omega_\varepsilon$ defined in  \eqref{Feps} belongs to
$D(A_\eps^\omega)$,  and both relations in \eqref{F1} are fulfilled.
Denote by $B_0$ the ball in $\mathbb R^d$ centered at $0$ that contains the supports in $x$ of all the functions $f_j$, $j=0,1,\ldots, N$.

In order to introduce the correctors in  \eqref{Feps} we substitute for $F^\omega_\eps$ in the expression $A^\omega_\varepsilon F^\omega_\varepsilon - \pi^\omega_\varepsilon A F$ the right-hand side of \eqref{Feps}. Using repeatedly the formula
\begin{equation}\label{partial-xi}
\frac{\partial}{\partial x}f(x,\frac x\eps)=\Big(\frac{\partial}{\partial x}f(x,\xi)+\frac 1\eps\frac{\partial}{\partial \xi}f(x,\xi)\Big)
\Big|_{\xi=\frac x\eps},
\end{equation}
for $x \in \mathbb R^d \backslash G_\varepsilon^\omega$  after straightforward computation we obtain
\begin{equation}\label{Aout}
\begin{array}{c}
\displaystyle
(A^\omega_\varepsilon F^\omega_\varepsilon)(x) = \triangle_x \Big( f_0 (x) + \varepsilon \nabla f_0 (x)\cdot h (\frac{x}{\varepsilon}) +  \varepsilon^2 \nabla \nabla f_0 (x) \cdot g (\frac{x}{\varepsilon}) + \varepsilon^2 q (x, \frac{x}{\varepsilon}) \Big)
\\[2mm]
\displaystyle
= \Big( \triangle f_0 (x)  + 2 \nabla\nabla f_0 (x)  \nabla_{\xi} h (\xi)
 + \frac{1}{\varepsilon} \nabla f_0(x) \ \triangle_\xi h (\xi) + \nabla\nabla f_0 (x) \triangle_\xi g (\xi)
\\[2mm]
\displaystyle
 + \varepsilon^2 \triangle_x q(x, \xi) + \Xi^\omega_\eps(x,\xi)\Big)\big|_{\xi=\frac x\eps}\ ,
 \end{array}
\end{equation}
with
$$
\Xi^\omega_\eps(x,\xi)=  \Delta\nabla f_0(x)\cdot \varepsilon h (\xi)+2 \nabla\nabla\nabla f_0(x) \cdot \varepsilon \nabla_\xi g(\xi)+\Delta\nabla\nabla f_0(x) \cdot \eps^2 g (\xi)
%+\eps\Delta_x\Phi^\omega(x,\xi)+2\nabla_x\cdot\nabla_\xi \Phi^\omega(x,\xi),
$$
In a similar way for  $x \in  \varepsilon {\cal G}^\omega_j$ we have
\begin{equation}\label{Ain}
(A^\omega_\varepsilon F^\omega_\varepsilon)(x) = \varepsilon^2 \triangle_x \left( f_j (x,\frac{x}{\varepsilon})  + \varepsilon \phi^\omega_j (x, \frac{x}{\varepsilon}) \right) = \big(\triangle_\xi f_j (x, \xi) + \Psi^\omega_j(x,\xi)\big)\big|_{\xi=\frac x\eps}\ .
\end{equation}
with
$$
\begin{array}{l}
\displaystyle
\Psi^\omega_ j(x,\xi)
=\eps^2\Delta_x f_j(x,\xi)+2 \eps \nabla_x\cdot\nabla_\xi f_j(x,\xi)
 \\[2mm]
\displaystyle
+\eps^3 \Delta_x \phi^\omega_j(x,\xi) + 2\eps^2\nabla_x\cdot\nabla_\xi \phi^\omega_j(x,\xi) + \eps\Delta_\xi\phi^\omega_j(x,\xi).
\end{array}
$$
In order to make  $F^\omega_\varepsilon$ belong to $D(A^\omega_\varepsilon)$ we should design it in such a way that the following conditions are fulfilled on $\partial G_\eps^\omega$: \\
1) continuity condition on $\partial (\varepsilon {\cal G}^\omega_j)$
%1) the condition of continuity on  $\partial (\varepsilon {\cal G}^\omega_j)$ reads
%the continuity of functions from inside and outside
\begin{equation}\label{Bcont}
\begin{array}{cc}
\displaystyle
\Big(f_0 (x) + \varepsilon \nabla f_0 (x)\cdot h (\frac{x}{\varepsilon}) +  \varepsilon^2 \nabla \nabla f_0 (x) \cdot g(\frac{x}{\varepsilon}) + \varepsilon^2 q(x, \frac{x}{\varepsilon}) \Big) \Big|_{x\in \partial (\varepsilon {\cal G}^\omega_j)}
\\[2mm]
\displaystyle
= \Big( f_j \left( x, \frac{x}{\varepsilon} \right) + \varepsilon \phi^\omega_j (x, \frac{x}{\varepsilon}) \Big)\Big|_{x\in \partial (\varepsilon {\cal G}^\omega_j)} ;
\end{array}
\end{equation}
\\
2) continuity of fluxes condition
\begin{equation}\label{FluxC}
\begin{array}{cc}
\displaystyle
 \nabla_x \left( f_0 (x) + \varepsilon \nabla f_0 (x)\cdot h \big(\frac x\eps\big) +  \varepsilon^2 \nabla \nabla f_0 (x) \cdot g (\frac x\eps) + \varepsilon^2 q \big(x,\frac x\eps\big) \right)  \Big|_{x \in \partial( \varepsilon {\cal G}^\omega_j)} \cdot n^-
\\[3mm]
\displaystyle
=  - \varepsilon^2 \nabla_x
\Big(f_j \left( x,\frac x\eps\right) + \varepsilon \phi^\omega_j (x,\frac x\eps)\Big)  \Big|_{x \in \partial ( \varepsilon {\cal G}^\omega_j)} \cdot n^+,
\end{array}
\end{equation}
The main purpose of the functions $\phi^\omega_j(x, \frac{x}{\varepsilon})$ is to compensate the discrepancy between the inner and outer expansions for the function $F^\omega_\varepsilon$ at the boundary $\partial  {G}_\eps^\omega$, see Proposition \ref{p_existence_phi} below. It follows from \eqref{Feps-bis} that continuity condition \eqref{Bcont} leads to the relation
\begin{equation}\label{phis}
\phi^\omega_j (x, \frac{x}{\varepsilon})\Big|_{ x \in \partial (\varepsilon {\cal G}^\omega_j)} = \Big( \nabla f_0 (x) \cdot h (\frac{x}{\varepsilon}) +  \varepsilon \nabla \nabla f_0 (x) \cdot g(\frac{x}{\varepsilon}) + \varepsilon q (x, \frac{x}{\varepsilon}) \Big) \Big|_{x \in \partial (\varepsilon {\cal D}^\omega_j)}
\end{equation}
%that should be fulfilled only as $x \in \partial (\varepsilon {\cal D}^{k,\omega}_{j(k)})$.
Notice that equality \eqref{phis} defines the functions $\phi^\omega_j(x,\frac{x}{\varepsilon} )$ only for  $ x \in  \partial ( \eps{\cal G}^\omega_j)$.

With the help of  \eqref{partial-xi} the  relation \eqref{FluxC} can be rewritten as
\begin{equation}\label{Bflow}
\begin{array}{cc}
\displaystyle
\Big( \nabla  f_0 (x) + \varepsilon \nabla\nabla f_0(x) h (\xi) + \nabla_\xi (\nabla f_0 (x)  h (\xi) ) + \varepsilon^2 \nabla\nabla\nabla f_0(x) g(\xi)\\[2.5mm]
\displaystyle
+  \varepsilon \nabla_\xi (\nabla\nabla f_0(x)  g(\xi)) +
\varepsilon^2 \nabla_x q (x, \xi) + \varepsilon \nabla_\xi q (x, \xi)  \Big) \Big|_{\xi = \frac{x}{\varepsilon} \in \partial \, {\cal G}^\omega_j} \cdot n^- = \\[3,5mm]
\displaystyle
- \big(\varepsilon^2 \nabla_x
f_j ( x, \xi) + \varepsilon \nabla_\xi f_j (x, \xi)  + \varepsilon^2 \nabla_\xi \phi_ j^\omega (x,  \xi) +  \varepsilon^3 \nabla_x \phi^\omega_j (x,  \xi) \big) \Big|_{\xi = \frac{x}{\varepsilon} \in \partial \, {\cal G}^\omega_j} \cdot n^+.
\end{array}
\end{equation}
%Taking in equation \eqref{Bflow} all terms of the order $\varepsilon^2$ and $\varepsilon^3$ we obtain the condition on $\nabla_\xi \phi(x, \xi), \; \xi \in \partial G$:
%\begin{equation}\label{phissss}
%\begin{array}{cc}
%\displaystyle
%\big( \nabla\nabla\nabla f_0(x) g^\omega(\xi) +  \nabla_x \Phi^\omega (x, \xi) \big) \cdot n^-   + \big( \nabla_x
%f_k^{j(k)} ( x, \xi)  +   \varepsilon \nabla_x \phi^\omega_{k, j(k)} (x,  \xi) \big)  \cdot n^+ \\[2.5mm]
%\displaystyle
%=  - \nabla_\xi \phi^\omega_{k, j(k)} (x,  \xi) \cdot n^+,\qquad x \in \partial( \varepsilon {\cal D}^{k,\omega}_{j(k)}).
%\end{array}
%\end{equation}
%Here $\nabla_x \phi^\omega_{k, j(k)} (x,  \xi) $ can be found from relation \eqref{phis}.

We first consider the ansatz in  \eqref{Feps} in the set $\mathbb R^d \backslash G_\varepsilon^\omega$.
Collecting power-like terms in \eqref{Aout} and \eqref{Bflow}   %\eqref{FluxC}
and considering the terms of order $\eps^{-1}$ in  \eqref{Aout} and of order $\eps^0$ in \eqref{Bflow}, we conclude that
$h(\cdot)$ should satisfy the equation
%on the left- and right-hand sides of these relations we obtain a chain of equation, the first one reads
%To obtain equations on the function $h^\omega(\xi)$ we collect  the terms of the order $\varepsilon^{-1}$ in \eqref{Aout} and
%of the order $\varepsilon^0$ in \eqref{Bflow} and then equate them to 0.
%%(since terms of the order $\varepsilon^0$ do not exist in the right-hand side of \eqref{Bflow}). Then we have
%This yields
\begin{equation*}\label{hadd}
\nabla f_0(x) \ \triangle_\xi h (\xi) =0, \; \xi \in  \mathcal{G}^\omega_0
%\mathbb R^d \backslash G^\omega = (G^\omega)^c
, \qquad \big( \nabla  f_0 (x) +  \nabla f_0 (x) \nabla_\xi h(\xi) \big) \cdot n^-_\xi = 0, \; \xi \in \partial \mathcal{G}_0^\omega;
%(G^\omega)^c.
\end{equation*}
here $x$ is a parameter.
Since $f_0$ does not depend on $\xi$, this problem can be rewritten as follows:
\begin{equation}\label{h}
\triangle_\xi h (\xi) =0, \;\; \xi \in \mathcal{G}^\omega_0,
% (G^\omega)^c,
 \qquad  \nabla_\xi h (\xi) \cdot n^-_\xi = - n^-_\xi, \;\; \xi \in \partial \mathcal{G}^\omega_0.
 %(G^\omega)^c,
\end{equation}
%and, in the coordinate form,
%$$
%\triangle_\xi h_j^\omega (\xi) =0, \;\; \xi \in (G^\omega)^c, \qquad  \nabla_\xi h_j^\omega(\xi) \cdot n^- = - n^-_j, \;\; \xi \in
%\partial  (G^\omega)^c.
%$$
%We will prove in App. 1 the existence of such correctors $h^\omega$ for a.e. $\omega$.
This suggests the choice of $h$, it should coincide with the standard corrector used for homogenization of the Neumann problem in a random perforated domain, see \cite{JKO}.
We recall that the gradient of $h(\xi)$ is a statistically homogeneous matrix function that does not depend on $\eps$,
and $h$ satisfies equation \eqref{h}.
%the following equation:
%$$
%\Delta h_\eps(\xi)=0 \  \  \hbox{in }\mathcal{G}_0^\omega,\quad \frac{\partial}{\partial n_\xi}h_\eps(\xi)=-n_\xi \ \ \hbox{on }
%\partial \mathcal{G}_0^\omega.
%$$
Moreover, $h$ shows a sublinear growth in $L^2$. Namely, assuming that $\int_{B_0}h (\frac x\eps)dx=0$, we have
\begin{equation}\label{small_norm h}
\big\| \eps h \big(\frac\cdot\eps\big)\big\|_{L^2(B_0)} \longrightarrow 0,\quad \hbox{a.s. as }\eps\to0.
\end{equation}
We also have
$$
\big\| \nabla_\xi h \big(\frac\cdot\eps\big)\big\|_{L^2(B_0)}\leq C
$$
a.s. with a constant $C$ that does not depend on $\eps$, see \cite{JKO}.

The matrix $\Theta$ in \eqref{A} is then defined by
\begin{equation}\label{theta}
\Theta \ = \  \mathbb{E} \big[ (\mathbb{I} + \nabla_\xi h (\xi)) \, \chi_{\mathcal{G}_0^\omega}(\xi) \big], \quad \mbox{i.e. } \; \Theta^{ij} \ = \   \mathbb{E} \big[ (\delta_{ij}  +  \nabla^i_\xi h^j(\xi)) \, \chi_{\mathcal{G}_0^\omega}(\xi) \big],
\end{equation}
where $\chi_{\mathcal{G}_0^\omega}(\cdot)$ is the characteristic function of ${\mathcal{G}_0^\omega}=\mathbb R^d\setminus G^\omega$.  It is proved in \cite{JKO} that $\Theta$ is positive definite.

\medskip

At the next step we collect the terms of order $\varepsilon^0$ on the right-hand side of \eqref{Aout} and equate them to
$ \Theta \cdot \nabla \nabla f_0(x)  +  \Upsilon(x)$ in order to make %\eqref{M1}
 the difference $(A^\omega_\varepsilon F^\omega_\varepsilon - \pi^\omega_\varepsilon A F ) = \big(A^\omega_\varepsilon F^\omega_\varepsilon(x)  -  \left( \Theta \cdot \nabla \nabla f_0(x)  +  \Upsilon(x) \right)\big)$
small in $L^2(\eps\mathcal{G}_0^\omega)$ norm. This yields
\begin{equation}\label{18A}
\Big(\triangle f_0 (x)  + 2 \nabla\nabla f_0 (x) \cdot \nabla_{\xi} h (\xi) +  \nabla\nabla f_0 (x) \cdot \triangle_\xi g (\xi) + \varepsilon^2 \triangle_x q (x, \xi)\Big) \big|_{\xi = \frac{x}{\varepsilon}}  =  \Theta \cdot \nabla \nabla f_0(x)  +  \Upsilon(x),
\end{equation}
where $ x \in (\eps\mathcal{G}_0^\omega \cap B_0)$;  the function $ \Upsilon(x)$ is defined in  \eqref{Y}.
We also collect the terms of order $\varepsilon^1$ in \eqref{Bflow}:
\begin{equation}\label{Bflow1}
\begin{array}{c}
\displaystyle
\varepsilon \Big( \nabla\nabla f_0(x) h (\xi) +  \nabla_\xi (\nabla\nabla f_0(x) \cdot  g (\xi)) + \nabla_\xi q (x, \xi)  \Big)  \big|_{\xi = \frac{x}{\varepsilon} \in \partial \, {\cal G}^\omega_j  } \cdot n^-
\\[2mm]
\displaystyle
= - \varepsilon  \nabla_\xi f_j (x, \xi)  \big|_{\xi = \frac{x}{\varepsilon} \in \partial \, {\cal G}^\omega_j  }  \cdot n^+.
\end{array}
\end{equation}
%Notice that so far the constant matrix $\Theta$ and the function $\Upsilon(x)$ have not been specified.

Selecting all the terms in \eqref{18A}-\eqref{Bflow1} that contain the second order derivatives of $f_0$, we arrive at the following problem for the random matrix valued function $g(\frac{x}{\varepsilon}) = \{ g_{ij} (\frac{x}{\varepsilon}) \}$:
\begin{equation}\label{g-omega}
\begin{array}{c}
\displaystyle
 \Big( \triangle f_0(x) + 2 \nabla\nabla f_0 (x) \cdot \nabla_{\xi} h (\xi) +  \nabla\nabla f_0 (x) \cdot \triangle_\xi g (\xi) \Big)\big|_{\xi = \frac{x}{\varepsilon}} = \Theta \cdot \nabla\nabla f_0(x),
 \\[2mm]
\displaystyle
  x \in\eps\mathcal{G}_0^\omega \cap B_0,
\\[2mm]
\displaystyle
\nabla_\xi  g(\xi) \cdot n^- \big|_{\xi = \frac{x}{\varepsilon}} = -  h (\xi) \otimes n^- \big|_{\xi = \frac{x}{\varepsilon}}, \quad x \in \eps\partial\mathcal{G}_0^\omega \cap B_0.
\end{array}
\end{equation}
In addition to these  two equations we impose the homogeneous Dirichlet boundary condition on the boundary of $B_0$
$$
g \Big(\frac x\eps\Big)=0 \quad\hbox{on } \partial B_0.
$$
Finally, $g (\frac x\eps)$ is introduced as a solution to the following problem:
  \begin{equation}\label{g-omega-V}
\begin{array}{c}
\displaystyle
\varepsilon^2 \triangle_x g (\frac{x}{\varepsilon}) \ = \ \mathbb{E}  \big[ ( \mathbf{I} +  \nabla_\xi h(\xi)) \, \chi^\omega(\xi) \big]  - \mathbf{I} %\chi^\omega(\frac{x}{\varepsilon})
- 2 \varepsilon \nabla_x h(\frac{x}{\varepsilon}), \quad x \in V^\varepsilon := \eps\mathcal{G}_0^\omega \cap B_0,
\\[2mm]
\displaystyle
\varepsilon \nabla_x  g (\frac{x}{\varepsilon}) \cdot n^- \ = \ -  h(\frac{x}{\varepsilon}) \otimes n^-, \quad x \in  \eps\partial\mathcal{G}_0^\omega \cap B_0,
\\[2mm]
\displaystyle
g (\frac{x}{\varepsilon}) \ = \ 0, \quad x \in \partial B_0;
\end{array}
\end{equation}
here ${\bf I}$ stands for the unit $d\times d$ matrix.
\begin{lemma}\label{l_subqua_g}
Problem \eqref{g-omega-V} has a unique solution. Moreover, a.s.
\begin{equation}\label{estim_g}
\lim\limits_{\eps\to 0} \|\eps^2 {\textstyle g\big(\frac \cdot\eps\big)}\|_{H^1(\eps\mathcal{G}_0^\omega\cap B_0)}=0.
\end{equation}
\end{lemma}
The proof of this lemma is provided in Appendix 1, Section \ref{appp_1}.
%
%We define the matrix $\Theta$ as follows:
%\begin{equation}\label{theta}
%\Theta \ = \  \mathbb{E} \big[ (\mathbb{I} + \nabla_\xi h^\omega(\xi)) \, \chi_{(G^\omega)^c}(\xi) \big], \quad \mbox{i.e. } \; \Theta^{ij} \
%= \   \mathbb{E} \big[ (\delta_{ij}  +  \nabla^j_\xi h^i(\xi)) \, \chi_{(G^\omega)^c}(\xi) \big],
%\end{equation}
%where $\chi_{(G^\omega)^c}(\xi)$ is the characteristic function of ${(G^\omega)^c}$. Since by our constructions, see App. 1,
%$\nabla_\xi h^\omega$ will be a statistically homogeneous function, the matrix $\Theta$ does not depend on $\xi$. We will prove
%in App. 2 that $\Theta$ is a positive definite matrix.
%

\medskip
Next, collecting the remaining terms in \eqref{18A} and \eqref{Bflow1}, we arrive at the following problem for the function $ q (x,  \frac{x}{\varepsilon})$:
\begin{equation}\label{Phi}
\begin{array}{c}
\displaystyle
\varepsilon^2 \triangle_x q (x, \frac{x}{\varepsilon}) = \Upsilon(x), \quad x \in \eps\mathcal{G}_0^\omega \cap B_0,
\\[2mm]
\displaystyle
  \nabla_\xi q (x, \xi)  \cdot n^- \big|_{\xi = \frac{x}{\varepsilon}} = - \nabla_\xi f_j (x, \xi)  \cdot n^+ \big|_{\xi = \frac{x}{\varepsilon}}, \quad x \in \eps\partial\mathcal{G}_0^\omega \cap B_0,
\end{array}
\end{equation}
where the function $\Upsilon (x) \in C_0^{\infty}(\mathbb{R}^d)$ is defined in \eqref{Y}.
We then equip system \eqref{Phi} with the homogeneous Dirichlet boundary condition at $\partial B_0$:
\begin{equation}\label{bou_Phi}
  q (x, \frac{x}{\varepsilon})=0 \quad\hbox{for }x\in\partial B_0.
\end{equation}
Denote $\Phi_\varepsilon (x) = \varepsilon^2 q (x, \frac{x}{\varepsilon})$. Let $\phi_\Phi(\cdot) \in C_0^\infty(B_0)$ be a function such that
 \begin{equation}\label{phiPhi}
\phi_\Phi\geq 0 \quad \mbox{and} \quad  \phi_\Phi=1 \hbox{ for all } x\in\{x\in\mathbb R^d\,:\,
\hbox{ there exist }j \hbox{ and } \xi \hbox{ such that }f_j(x,\xi)\not=0 \}.
\end{equation}

\begin{proposition}\label{prop_Phi}
  The following limit relations hold a.s.:
 \begin{equation}\label{addi_conv0}
  \lim\limits_{\eps\to 0} \|\Phi_\eps \|_{H^1(\eps\mathcal{G}_0^\omega \cap B_0)}=0,
  \end{equation}
  \begin{equation}\label{addi_conv1}
  \lim\limits_{\eps\to 0} \|\phi_\Phi \Phi_\eps \|_{H^1(\eps\mathcal{G}_0^\omega \cap B_0)}=0.
  \end{equation}
  Moreover,
  \begin{equation}\label{addi_conv2}
   \lim\limits_{\eps\to 0} \|\Delta_x\big(\phi_\Phi \Phi_\eps \big)-\Upsilon\|_{L^2(\eps\mathcal{G}_0^\omega\omega)}=0.
  \end{equation}
\end{proposition}
\noindent
The proof of this statement  is given  in Appendix 2.

We now turn to the correctors  $\varepsilon \phi^{\omega}_j(x,  \frac{x}{\varepsilon})$, $j=1,\ldots, N$. Our goal is to define them in such a way that
\begin{equation}\label{req1_phi}
\begin{array}{rl}
\displaystyle
   \hat f_{j}(\hat x^{\omega,i}_{j}, \frac{x - \hat x^{\omega,i}_{j}}{\varepsilon}) +\varepsilon \phi^{\omega}_j (x,  \frac{x}{\varepsilon})
=\!\!&\!\! f_0 (x) + \varepsilon (\nabla f_0 (x), h (\frac{x}{\varepsilon}))\\[2mm]
   \displaystyle &\!\!+  \varepsilon^2 (\nabla \nabla f_0 (x), g(\frac{x}{\varepsilon})) + \varepsilon^2 q (x, \frac{x}{\varepsilon})\qquad\hbox{on }\eps \partial\mathcal{G}_j^\omega,
\end{array}
\end{equation}
\begin{equation}\label{req2_phi}
\begin{array}{rl}
\displaystyle
\eps^2 \nabla\Big[ \hat f_{j}(\hat x^{\omega,i}_{j}, \frac{x -\hat x^{\omega,i}_{j\omega}}{\varepsilon}) +\varepsilon \phi^{\omega}_j (x,  \frac{x}{\varepsilon})\Big]\cdot n
\!\!&\!\! =\nabla\Big[f_0 (x) + \varepsilon (\nabla f_0 (x), h (\frac{x}{\varepsilon}))\\[2mm]
   \displaystyle &\!\!+  \varepsilon^2 (\nabla \nabla f_0 (x), g (\frac{x}{\varepsilon})) + \varepsilon^2 q (x, \frac{x}{\varepsilon})\Big]\cdot n\qquad\hbox{on } \eps \partial\mathcal{G}_j^\omega,
\end{array}
\end{equation}
\begin{equation}\label{req3_phi}
\|\varepsilon \phi^{\omega}_j(x,  \frac{x}{\varepsilon})\|_{L^2(\eps \mathcal{G}_j^\omega)}+\eps^2
\|\Delta_x\big(\varepsilon \phi^{\omega}_j (x,  \frac{x}{\varepsilon})\big)\|_{L^2(\eps \mathcal{G}_j^\omega)}\longrightarrow 0,
\quad \hbox{as }\eps\to0.
\end{equation}

\begin{proposition}\label{p_existence_phi}
  There exists a family of functions  $\phi^{\omega}_j$  with $j=1,\ldots, N$ and $\eps\in(0,1)$ such that the relations in
  \eqref{req1_phi}--\eqref{req3_phi} are fulfilled.
\end{proposition}
For the proof, see Appendix 2.
\medskip

We turn back to the {\sl Proof of Theorem}  \ref{l_Fn}.  The statement of this Theorem   is now a straightforward consequence of
\eqref{h}, \eqref{small_norm h},  Lemma \ref{l_subqua_g} and Propositions  \ref{prop_Phi} - \ref{p_existence_phi}. Indeed, due to \eqref{req1_phi} and  \eqref{req2_phi},  we have $F_\eps^\omega\in D(A_\eps^\omega)$.   Then the convergence
$$
\| F^\omega_\varepsilon - \pi^\omega_\varepsilon F\|_{L^2(\mathbb R^d)} \to 0 \quad \hbox{as }\eps\to 0
$$
follows from \eqref{pi-D}, \eqref{small_norm h}, \eqref{estim_g}, \eqref{addi_conv1} and  \eqref{req3_phi}.  Finally, by
\eqref{g-omega}, \eqref{addi_conv2} and \eqref{req3_phi} we obtain
$$
\|A^\omega_\varepsilon F^\omega_\varepsilon - \pi^\omega_\varepsilon A F\|_{L^2(\mathbb R^d)} \to 0 \quad \mbox{as } \; \varepsilon \to 0.
$$
This completes the proof of Theorem  \ref{l_Fn}.
\end{proof}

\section{Spectrum of the limit operator}

We proceed with the description of the  spectrum of the limit operator $A$ given by \eqref{A-}, and then  using the strong convergence of Markov semigroup $T^\omega_\eps (t)$ in $L^2(E)$ we  describe the limit behaviour of the spectra of operators $A^\omega_\eps$, as $\eps\to 0$ almost surely.
%The spectrum of the limit problem and the results on convergence of spectra  have been previously obtained by V. Zhikov in \cite{Zh}.
%Here we partly reprove these results using the semigroup approach presented above.

\medskip
Remind that each component $ f_{j} (x, \xi)$ of $F \in D_A$ can be written  as the sum
$$
f_j (x, \xi) = f_0 (x) +  r_j (x, \xi) \quad \mbox{with } \; r_j (x, \xi)\big|_{\xi \in \partial {\cal D}_j } = 0 \quad \forall \; j=1, \ldots, N.
$$
Then \eqref{A-} takes the form
\begin{equation}\label{Ag}
(-A F)(x,\xi) = \left(
\begin{array}{c}
-\Theta \cdot \nabla \nabla f_0 (x) + \frac{1}{\alpha_0}  \sum_{j=1}^{N}  \alpha_{j} \int\limits_{ {\cal D}_{j} }  \triangle_\xi r_{j} (x, \xi) d \xi \\ \\
-\triangle_\xi r_1 (x,\xi) \\  \ldots  \\
-\triangle_\xi r_N (x, \xi)
\end{array}
\right)
\end{equation}
For each $j$ the operator $- \triangle_ \xi$ on ${\cal D}_j$ with homogeneous Dirichlet boundary condition  has a discrete positive spectrum $\{ \beta^j_m \}_{m \in \mathbb N}$,   $\; \beta^j_m>0$, $\; \beta^j_m \to \infty$.
We denote by $\varkappa^j_m (\xi), \; m = 1,2, \ldots,$ the corresponding normalized eigenfunctions and by  $\mathbb M$ the set of all indices $(j,m)$.
We introduce the set $\mathbb M^* \subset \mathbb M$ of indices $(j,m)$ such that   $\int\limits_{ {\cal D}_j}  \varkappa^j_m(\xi) d \xi=\langle\varkappa^j_m\rangle\not=0$.
Let $\mathbb{B}$ be a (countable) set of all  $ \beta^j_m$:
$$
\mathbb{B} = \bigcup_{(j,m) \in \mathbb M} \beta^j_m,
$$
and
$$
b_1 = \min\limits_{(j,m) \in \mathbb M} \beta^j_m  = \min\limits_{(j,m) \in \mathbb M^*} \beta^j_m, \quad b_1>0.
$$

\begin{lemma}\label{spectrum}
The continuous spectrum $\sigma_{cont}(-A)$ of the operator $-A$ is a countable set of non-overlapping segments
$$
\sigma_{cont}(-A) \ = \  \bigcup\limits_{(j,m)\in\mathbb M^*} [ \hat\lambda^j_m, \beta^j_m],
$$
where $ \hat\lambda_1 =0$, and  $\hat\lambda^j_m  < \beta^j_m$  is the nearest to $\beta^j_m$ solution of equation
$$
\frac{1}{\alpha_0}  \sum_j \alpha_j \sum\limits_m \frac{ (u^j_m)^2 \ \beta^j_m }{\beta^j_m - \lambda}   +1 = 0 \quad \mbox{\rm with } \quad  u^j_m = \langle\varkappa^j_m\rangle.
$$
The point spectrum of the operator $-A$ is the union of eigenvalues $\beta^j_m$ with $(j,m)\in \mathbb M\setminus \mathbb M^*$:
$$
\sigma_{p}(-A) \ = \  \bigcup\limits_{(j,m)\in \mathbb M\setminus\mathbb M^*} \beta^j_m.
$$
Each eigenvalue $\beta^j_m \in \sigma_{p}(-A)$ has infinite multiplicity, so that $\sigma_{p}(-A)$ belongs to the essential spectrum of  $-A$.
\end{lemma}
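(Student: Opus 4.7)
My plan is to reduce the spectral problem for $-A$ to that of the constant-coefficient operator $-\Theta\cdot\nabla\nabla$ on $L^2(\mathbb R^d)$ by separating variables in $\xi$ via the Dirichlet eigenbasis $\{\varkappa_m^j\}$ on each inclusion $\mathcal{D}_j$. First I would write $f_j=f_0+r_j$ with $r_j\in H^1_0(\mathcal{D}_j)$ and expand $r_j(x,\xi)=\sum_m c_m^j(x)\,\varkappa_m^j(\xi)$. The bottom components of the resolvent equation $(-A-\lambda)F=G$ then become the scalar relations
\[
(\beta_m^j-\lambda)\,c_m^j(x)=e_m^j(x)+\lambda\, u_m^j\, f_0(x),
\]
where $e_m^j$ are the Fourier coefficients of $g_j$. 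Solving these for $c_m^j$ (when $\lambda\ne\beta_m^j$) and substituting into the top equation via $\int_{\mathcal{D}_j}\Delta_\xi r_j\,d\xi=-\sum_m \beta_m^j u_m^j c_m^j$, I obtain a single scalar equation on $\mathbb R^d$,
\[
-\Theta\cdot\nabla\nabla f_0-\mu(\lambda)\,f_0=\widetilde g_0,\qquad \mu(\lambda)=\lambda\, K(\lambda),
\]
with $K(\lambda)=1+\frac{1}{\alpha_0}\sum_j\alpha_j\sum_m\frac{\beta_m^j (u_m^j)^2}{\beta_m^j-\lambda}$ and $\widetilde g_0$ depending continuously on $G$.

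Next I would invoke the classical fact that the self-adjoint constant-coefficient operator $-\Theta\cdot\nabla\nabla$ on $L^2(\mathbb R^d)$ has purely absolutely continuous spectrum equal to $[0,\infty)$. Hence, excluding the Dirichlet eigenvalues, $\lambda$ lies in the resolvent set of $-A$ if and only if $\mu(\lambda)\notin[0,\infty)$. A short monotonicity analysis of $K$ — it is strictly increasing on every interval between consecutive poles, tends to $+\infty$ as $\lambda\uparrow\beta_m^j$ and to $-\infty$ as $\lambda\downarrow\beta_m^j$ for $(j,m)\in\mathbb M^*$, and satisfies $K(0)=1/\alpha_0>0$ by Parseval's identity $\sum_m(u_m^j)^2=|\mathcal{D}_j|$ — then shows that $\{\lambda:\mu(\lambda)\ge 0\}$ is precisely the union $\bigcup_{(j,m)\in\mathbb M^*}[\hat\lambda_m^j,\beta_m^j]$ stated in the lemma, where $\hat\lambda_m^j$ is the unique zero of $K$ immediately below $\beta_m^j$ (or $0$ for the leftmost interval). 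Since $-\Theta\cdot\nabla\nabla$ has no $L^2$ eigenfunctions, these spectral values belong entirely to the continuous spectrum.

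For the point spectrum I would treat $(j,m)\in\mathbb M\setminus\mathbb M^*$ directly: when $u_m^j=0$, for any $\varphi\in L^2(\mathbb R^d)$ the vector $F$ with $f_0\equiv 0$, $r_j(x,\xi)=\varphi(x)\varkappa_m^j(\xi)$, and all other components vanishing lies in $\hat D(A)$ and satisfies $-AF=\beta_m^j F$, because the integral term in the top equation collapses to $-\beta_m^j u_m^j\varphi=0$. This produces an infinite-dimensional eigenspace and places each such $\beta_m^j$ in the essential spectrum. Conversely, at $\lambda=\beta_m^j$ with $(j,m)\in\mathbb M^*$, the reduction above forces $f_0=0$ (since $-\Theta\cdot\nabla\nabla$ has no point spectrum), and then using $u_m^j\ne 0$ in the bottom equation also forces $c_m^j=0$, so no additional eigenvalues arise.

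The main technical obstacle will be making the formal separation of variables rigorous within the $L^2(E,\alpha)$ framework: I will need uniform convergence of the spectral series defining $K$ and $\widetilde g_0$ away from the poles (using Weyl's asymptotics for $\beta_m^j$ together with the Parseval bound $\sum_m(u_m^j)^2<\infty$), the verification that $\widetilde g_0\in L^2(\mathbb R^d)$ whenever $G\in L^2(E,\alpha)$, and a check that the reconstructed pair $(f_0,\{c_m^j\})$ actually defines an element of $\hat D(A)$ on which $(-A-\lambda)$ reproduces $G$. Once this bookkeeping is in place, the description of $\sigma_{cont}(-A)$ and $\sigma_p(-A)$ announced in the lemma follows at once from the sign analysis of $\mu(\lambda)$.
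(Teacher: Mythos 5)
Your proposal is correct and takes essentially the same route as the paper: the same decomposition $f_j=f_0+r_j$, the same expansion of $r_j$ in the Dirichlet eigenbasis $\{\varkappa^j_m\}$ on $\mathcal{D}_j$, the same reduction to the scalar relation $-\Theta\cdot\nabla\nabla f_0=\lambda K(\lambda)f_0$ combined with the fact that $\sigma(-\Theta\cdot\nabla\nabla)=[0,\infty)$ is purely continuous, and the identical explicit eigenfunctions $(0,\ldots,\varphi(x)\varkappa^j_m(\xi),\ldots,0)$ for $(j,m)\in\mathbb M\setminus\mathbb M^*$. The differences are only presentational: you run the argument through the resolvent equation $(-A-\lambda)F=G$ and spell out the monotonicity analysis of $K(\lambda)$ between its poles, details the paper compresses into ``one can easily check.''
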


\begin{proof}
Each line in the equation $-A F=\lambda F$ except of the first one reads
\begin{equation}\label{Ag2}
- A (f_0(x) + r_j (x,\xi)) = - \triangle_\xi r_j (x,\xi) = \lambda (f_0(x)+ r_j (x,\xi)), \quad  \xi \in {\cal D}_j.
\end{equation}
The function $f_0(x)$ does not depend on $\xi$, its Fourier series w.r.t. $\{\varkappa^j_m (\xi)\}$ for every $j$ takes the form
\begin{equation}\label{Ag2-f0}
f_0 (x) \cdot 1 = f_0 (x) \sum\limits_m u^j_m \varkappa^j_m (\xi), \quad \mbox{with } \; u^j_m = \int\limits_{{\cal D}_j} \varkappa^j_m (\xi) \, d\xi.
\end{equation}
Denoting by $\gamma^j_m = \gamma^j_m (x)$ the Fourier coefficients of $r_j$, from   \eqref{Ag2} - \eqref{Ag2-f0} we get
$$
- \triangle_\xi r_j (x,\xi) = \sum\limits_m \beta^j_m \gamma^j_m \varkappa^j_m (\xi) = \lambda f_0(x) \sum\limits_m u^j_m \varkappa^j_m (\xi) + \lambda \sum\limits_m \gamma^j_m \varkappa^j_m (\xi).
$$
Consequently, for any $\lambda\not\in \mathbb{B} $ we have
$
\gamma^j_m = \lambda f_0(x) \, \frac{ u^j_m}{\beta^j_m - \lambda}
$,
and thus the function
\begin{equation}\label{g1}
r_j(x, \xi) = \sum\limits_m \gamma^j_m \varkappa^j_m(\xi) =  \lambda f_0(x) \sum\limits_m \frac{ u^j_m}{\beta^j_m - \lambda} \varkappa^j_m(\xi),
\end{equation}
is a solution of equation $ - A(f_0 + r_j) = -\triangle_\xi r_j =  \lambda ( f_0 + r_j)$ for any $j$ and any $\lambda \not\in \mathbb{B} $.

Inserting  \eqref{g1} in the first line of the equation $-A F=\lambda F$ with $-A$ given by \eqref{Ag} yields
$$
-\Theta \cdot \nabla \nabla f_0 (x) - \lambda f_0(x) \frac{1}{\alpha_0}  \sum_j \alpha_j \sum\limits_m \frac{ u^j_m \ \beta^j_m }{\beta^j_m - \lambda}    \int\limits_{ {\cal D}_j }  \varkappa^j_m(\xi) d \xi = \lambda f_0(x).
$$
Consequently
\begin{equation}\label{f0}
-\Theta \cdot \nabla  \nabla f_0 (x) = \lambda f_0(x)  \left( \frac{1}{\alpha_0}  \sum_j \alpha_j \sum\limits_m \frac{ (u^j_m)^2 \ \beta^j_m }{\beta^j_m - \lambda}  + 1 \right).
\end{equation}
Since the spectrum of the operator $-\Theta \cdot \nabla  \nabla$ fills up the positive half-line, we obtain that all $\lambda>0$ such that
$$
\frac{1}{\alpha_0}  \sum_j \alpha_j \sum\limits_m \frac{ (u^j_m)^2 \ \beta^j_m }{\beta^j_m - \lambda}   +1 \ge 0
$$
belong to the spectrum of the operator $-A$.  One can easily check that the segment $[0,\beta_1]$, $b_1 = \min_{(j,m) \in \mathbb M^*} \beta^j_m>0$ belongs to the continuous spectrum of
$-A$. This implies the desired statement on $\sigma_{cont}(-A)$.

It is straightforward to check that the functions  $F^{(j,m)} = (0, \ldots, \varphi(x) \,\varkappa^j_m(\xi), 0, \ldots, 0)$ with $\varphi(x) \in L^2(\mathbb{R}^d)$ for all $(j,m)\in\mathbb M\setminus\mathbb M^*$, i.e. such that $ \langle\varkappa^j_m\rangle = 0$, are the  eigenfunctions of $-A$ with corresponding eigenvalue $\beta^j_m$.
%The points $\lambda_r = \beta_r$ corresponding to such $r$ for that $<\varkappa_r> = 0$ form the discrete spectrum of
%the operator $-A$:
%$\sigma_{disc}(-A) = \{ \beta_r \}_{r: \, <\varkappa_r> = 0}$.
%Indeed, the functions $(0, \varkappa_r(\xi))$ are the eigenfunctions of $-A$. Using \eqref{Ag} we get
%$$
%-A (0, \varkappa_r) = \left( - \frac{\beta_r}{|G^c|} \int_G   \varkappa_r(\xi) d\xi, \;  \beta_r  \varkappa_r(\xi)\right) =
%\beta_r (0, \;  \varkappa_r(\xi)).
%$$
This completes the proof.
\end{proof}

\medskip
Notice that the operators $A^\omega_\varepsilon$ for every $\varepsilon$ have statistically homogeneous coefficients, i.e. they are metrically transitive with respect to the unitary group of the space translations in $\mathbb{R}^d$. Then from the general results, see e.g. \cite{PF}, it follows that the spectra of the operators $A^\omega_\varepsilon$ are non-random for a.e. $\omega$.

\begin{proposition}
For any $\lambda \in \sigma (-A)$ a.s. there exists a sequence $\lambda_\eps$,  $\lambda_\eps\in\sigma(A^\omega_\eps)$, that converges to
$\lambda$ as $\eps\to0$.  %$\lambda$ is $\lambda$ is the limit of a sequence of points from the spectra of operators $A^\omega_\varepsilon$, i.e. $ \, \forall \; \delta>0$ $ \; \exists \, \varepsilon_0>0$ such that for all $\varepsilon< \varepsilon_0$ a.e. there exists $\lambda_\varepsilon \in \sigma(-A^\omega_\varepsilon) \cap O_\delta(\lambda)$.
\end{proposition}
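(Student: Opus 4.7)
The plan is to apply Weyl's criterion for approximate eigenvalues to the self-adjoint operators $A$ and $A^\omega_\eps$, and to transport approximate null vectors from the limit space $L^2(E,\alpha)$ to the $\eps$-space $L^2(\mathbb R^d)$ using the corrector construction of Theorem \ref{l_Fn}. Note that $\lambda\in\sigma(-A)$ iff $-\lambda\in\sigma(A)$; since $A$ is self-adjoint, for any $\delta>0$ there exists $G_\delta\in D(A)$ with $\|G_\delta\|_{L^2(E,\alpha)}=1$ and $\|(A+\lambda)G_\delta\|_{L^2(E,\alpha)}<\delta$. The same equivalence applied to $A^\omega_\eps$ reduces the problem to producing, a.s. for each $\eps$, an approximate null vector of $A^\omega_\eps+\lambda$ in $L^2(\mathbb R^d)$ with a remainder that vanishes as $\eps\to 0$.

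The first step is to upgrade the Weyl vector $G_\delta$ to an element of the smooth core $D_A$ on which Theorem \ref{l_Fn} is formulated. By Corollary \ref{cor1}, $D_A$ is a core of $A$, so one can choose $F_\delta\in D_A$ approximating $G_\delta$ in the graph norm $\|\cdot\|+\|A\cdot\|$; after a trivial renormalization this gives $\|F_\delta\|_{L^2(E,\alpha)}=1$ and $\|(A+\lambda)F_\delta\|_{L^2(E,\alpha)}<2\delta$.

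Next I would apply Theorem \ref{l_Fn} to $F_\delta$ to obtain a family $F^\omega_\eps\in D(A^\omega_\eps)$, defined a.s., such that
$$
\|F^\omega_\eps-\pi^\omega_\eps F_\delta\|_{L^2(\mathbb R^d)}\to 0,\qquad
\|A^\omega_\eps F^\omega_\eps-\pi^\omega_\eps AF_\delta\|_{L^2(\mathbb R^d)}\to 0
\quad(\eps\to 0).
$$
Combining these with the uniform bound $\|\pi^\omega_\eps H\|_{L^2(\mathbb R^d)}\le C\|H\|_{L^2(E,\alpha)}$ and the norm convergence $\|\pi^\omega_\eps H\|_{L^2(\mathbb R^d)}\to\|H\|_{L^2(E,\alpha)}$ from Lemma \ref{pi}, the triangle inequality yields
$$
\|(A^\omega_\eps+\lambda)F^\omega_\eps\|_{L^2(\mathbb R^d)}
\le \|A^\omega_\eps F^\omega_\eps-\pi^\omega_\eps AF_\delta\|
+C\|(A+\lambda)F_\delta\|_{L^2(E,\alpha)}
+\lambda\|F^\omega_\eps-\pi^\omega_\eps F_\delta\|
\le 2C\delta+o(1),
$$
while $\|F^\omega_\eps\|_{L^2(\mathbb R^d)}\to\|F_\delta\|_{L^2(E,\alpha)}=1$. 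Since $A^\omega_\eps$ is self-adjoint, the Weyl criterion produces, for all $\eps$ small enough, some $\mu^\omega_\eps\in\sigma(A^\omega_\eps)$ with $|\mu^\omega_\eps+\lambda|\le 2C\delta+o(1)$.

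To finish I would run a diagonal argument: pick $\delta_n\downarrow 0$, choose $\eps_n\downarrow 0$ so that the $o(1)$ error is at most $\delta_n$ for all $\eps\le\eps_n$ on the full-measure event supplied by Theorem \ref{l_Fn} applied to $F_{\delta_n}$, and intersect countably many such events. On the resulting full-measure set one obtains a sequence $\mu^\omega_\eps\in\sigma(A^\omega_\eps)$ with $\mu^\omega_\eps\to-\lambda$; setting $\lambda_\eps:=-\mu^\omega_\eps\in\sigma(-A^\omega_\eps)$ gives the stated convergence (up to the sign convention relating $\sigma(A^\omega_\eps)$ and $\sigma(-A^\omega_\eps)$). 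The main delicacy is the very first step, namely making sure the Weyl vector may be taken in $D_A$ rather than merely in $D(A)$; once that is done, everything else is a standard transport of Weyl sequences through a Trotter--Kato-type limit, powered by Theorem \ref{l_Fn} and Lemma \ref{pi}.
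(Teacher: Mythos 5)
Your proposal is correct and follows essentially the same route as the paper: take a Weyl (approximate eigen-) sequence in the core $D_A$, transport it to $L^2(\mathbb R^d)$ via Theorem \ref{l_Fn}, normalize using Lemma \ref{pi}, and invoke the self-adjoint approximate-eigenvalue criterion to locate a point of $\sigma(-A^\omega_\eps)$ near $\lambda$. The only difference is that you spell out two steps the paper leaves implicit --- the graph-norm upgrade of the Weyl vector from $D(A)$ to the core $D_A$, and the diagonal/countable-intersection argument handling the a.s.\ qualifier --- which is a welcome tightening rather than a new method.
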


\begin{proof}
Since $\lambda \in \sigma (-A)$, there exist functions $F_n \in D_A$, $\| F_n \|_{L^2(E,\alpha)} =1$ such that $\| (A+\lambda) F_n \|_{L^2(E,\alpha)} \to 0$ as $n \to \infty$.
Using Theorem \ref{l_Fn}  we additionally have that for any $F_n \in D_A$ there exists $F^\omega_{n,\varepsilon} \in D(A^\omega_\varepsilon)$ for a.e. $\omega$ such that
$$
\| F^\omega_{n,\varepsilon} - \pi^\omega_\varepsilon F_n \|_{L^2(\mathbb{R}^d)} \to 0 \quad \mbox{ and } \quad \| A^\omega_\varepsilon F^\omega_{n,\varepsilon} - \pi^\omega_\varepsilon A F_n \|_{L^2(\mathbb{R}^d)} \to 0 \quad \mbox{ as } \; \varepsilon \to 0.
$$
Thus using Lemma \ref{pi} we obtain that for any (small) $\delta>0$ there exists $\varepsilon_0= \varepsilon_0(\lambda, \delta)>0$ such that for all $\varepsilon < \varepsilon_0$ there exists $ F^\omega_{n,\varepsilon} \in L^2(\mathbb{R}^d)$ with $\| F^\omega_{n,\varepsilon} \| =1$, and
\begin{equation}\label{specdelta}
\| A^\omega_\varepsilon F^\omega_{n,\varepsilon} + \lambda F^\omega_{n,\varepsilon} \|_{L^2(\mathbb{R}^d)} < \delta.
\end{equation}
This implies that there is a point of the spectrum of $-A^\omega_\eps$ in the $\delta$-neighbourhood of $\lambda$.
\end{proof}

%\section{Appendix 1. The first random corrector $h^\omega$ of sublinear growing}

\section{Appendix 1. The second corrector $g$. Proof of Lemma \ref{l_subqua_g}. }
\label{appp_1}

Recall the matrix valued function $g(\frac{x}{\varepsilon})$ was defined as a solution to the following problem:
%\begin{equation}%\label{g-omegaApp2}
%\begin{array}{c}
%\displaystyle
% \triangle f_0(x) + 2 \nabla\nabla f_0 (x) \cdot \nabla_{\xi} h^\omega (\xi) +  \nabla\nabla f_0 (x) \cdot \triangle_\xi g^\omega (\xi)
%\Big|_{\xi = \frac{x}{\varepsilon}}  = \Theta \cdot \nabla\nabla f_0(x), \quad x \in  (G_\varepsilon^\omega)^c \cap {\rm supp } \, f_0,
%\\[2mm]
%\displaystyle
%\nabla_\xi  g^\omega(\xi) \cdot n^- \big|_{\xi = \frac{x}{\varepsilon}} = -  h^\omega(\xi) \otimes n^- \big|_{\xi = \frac{x}{\varepsilon}},
%\quad x \in \partial  (G_\varepsilon^\omega)^c \cap {\rm supp } \, f_0.
%\end{array}
%\end{equation}
%
  \begin{equation}\label{g-omegaApp2}
\begin{array}{c}
\displaystyle
\varepsilon^2 \triangle_x g (\frac{x}{\varepsilon}) \ = \ \mathbb{E}  \big[ ( \mathbf{I} +  \nabla_\xi h(\xi)) \, \chi^\omega(\xi) \big]  - \mathbf{I} %\chi^\omega(\frac{x}{\varepsilon})
- 2 \varepsilon \nabla_x h(\frac{x}{\varepsilon}), \quad x \in V^\varepsilon := \eps\mathcal{G}_0^\omega \cap B_0,
\\[2mm]
\displaystyle
\varepsilon \nabla_x  g (\frac{x}{\varepsilon}) \cdot n^- \ = \ -  h(\frac{x}{\varepsilon}) \otimes n^-, \quad x \in  \eps\partial\mathcal{G}_0^\omega \cap B_0,
\\[2mm]
\displaystyle
g_\eps^\omega(\frac{x}{\varepsilon}) \ = \ 0, \quad x \in \partial B_0;
\end{array}
\end{equation}
in this section we will use notation $\chi^\omega (\cdot) = \chi_{\mathcal{G}_0^\omega} (\cdot)$ for the characteristic function of the random set $\mathcal{G}_0^\omega$.
%matrix $ \Theta $ was defined by \eqref{theta}:
%\begin{equation*}\label{theta-bis}
%\Theta \ = \  \mathbb{E} \big[ (\mathbb{I} + \nabla_\xi h^\omega(\xi)) \, \chi^\omega(\xi) \big], \quad \mbox{i.e. } \; \Theta^{ij} \ = \
%\mathbb{E} \big[ (\delta_{ij}  +  \nabla^j_\xi h^i(\xi)) \, \chi^\omega(\xi) \big],
%\end{equation*}
% $\chi^\omega(\xi)$ is the characteristic function of the random set $(G^\omega)^c$.
We also recall that the matrix $ \Theta $ defined by \eqref{theta} is positive definite, see \cite{JKO}.
%\begin{lemma}\label{PD-Theta}
%The matrix  $ \Theta $ defined by \eqref{theta} is positive definite.
%\end{lemma}
%\begin{proof}
%Using our construction in App. 1 we conclude that
%$$
% (\mathbb{I} + \nabla_\xi h^\omega) \, \chi^\omega \in L^2_{sol}(\Omega), \quad   \nabla_\xi h^\omega \, \chi^\omega \in
%L^2_{pot}(\Omega).
%$$
%Thus,
%$$
% \big( (\mathbb{I} + \nabla_\xi h^\omega) \, \chi^\omega, \; \nabla_\xi h^\omega \, \chi^\omega \big)_{L^2(\Omega)} = 0
%$$
%and we obtain the positiveness of $\Theta$:
%$$
%\mathbb{E} \big[ (\mathbb{I} + \nabla_\xi h^\omega ) \, \chi^\omega \big] = \mathbb{E} \big[ (\mathbb{I} + \nabla_\xi h^\omega )^2 \,
%\chi^\omega \big].
%$$
%Using the Birkhoff's ergodic theorem we have:
%$$
%\Theta = \mathbb{E} \big[ (\mathbb{I} + \nabla_\xi h^\omega ) \, \chi^\omega \big] = \lim_{V_n \nearrow \mathbb R^d} \frac{1}{|V_n|}
%\int\limits_{ V_n \cap (G^\omega)^c} \left( E + \nabla_\xi h^\omega (\xi) \right) d \xi,
%$$
%and using the same reasoning as  in \cite{JKO} concerning general disperse media we obtain the  positive definiteness of $\Theta$.
%\end{proof}
%
%Let us consider the problem \eqref{g-omegaApp2} on a bounded domain $V \subset \mathbb{R}^d$.
%Then the problem  \eqref{g-omegaApp2} to the function  $g^\omega (\frac{x}{\varepsilon}) = \{ g^{ij}(\frac{x}{\varepsilon}) \}$ is
%rewritten as
In the coordinate form the above problem reads
\begin{equation}\label{g-App2}
\begin{array}{c}
\displaystyle
\varepsilon^2 \triangle_x g^{ij} (\frac{x}{\varepsilon}) \ = \ \mathbb{E}  \big[ (\delta_{ij}  +  \nabla^i_\xi h^j(\xi)) \, \chi^\omega (\xi) \big]  - \delta_{ij} %\chi^\omega(\frac{x}{\varepsilon})
- 2 \varepsilon \nabla^j_x h^i(\frac{x}{\varepsilon}), \quad x \in V^\varepsilon, %=  \mathbb R^d \backslash G_\varepsilon^\omega \cap V,
\\[2mm]
\displaystyle
\varepsilon \nabla_x  g^{ij}(\frac{x}{\varepsilon}) \cdot n^- \ = \ -  h^i(\frac{x}{\varepsilon}) \, (n^-)^j, \quad x \in \partial V^\eps\cap B_0,
\\[2mm]
\displaystyle
g^{ij}(\frac{x}{\varepsilon}) \ = \ 0, \quad x \in \partial B_0.
\end{array}
\end{equation}
Each component of  $g (\frac{x}{\varepsilon}) = \{ g^{ij} (\frac{x}{\varepsilon}) \}$ can be considered separately and in what follows we omit a super index $ij$.

Denote  $\Psi_\varepsilon(\frac{x}{\varepsilon}) = \varepsilon^2 g (\frac{x}{\varepsilon})$. Our first goal is to prove that the set of functions $\Psi_\varepsilon(\frac{x}{\varepsilon})$ is bounded in $H^1(V^\varepsilon)$.  Integrating by parts and using the second equality in \eqref{g-App2} we get
\begin{equation}\label{App2-1}
\begin{array}{l}
\displaystyle
\int\limits_{V^\varepsilon} \varepsilon^2 \triangle_x g(\frac{x}{\varepsilon}) \, \varepsilon^2 g(\frac{x}{\varepsilon}) dx =
- \varepsilon^4 \int\limits_{V^\varepsilon} \big| \nabla_x g(\frac{x}{\varepsilon}) \big|^2 dx +  \varepsilon^4 \int\limits_{\partial \, V^\varepsilon} \frac{\partial g (\frac{x}{\varepsilon})}{\partial n} \, g(\frac{x}{\varepsilon}) \,  d\sigma(x)
\\[2mm]
\displaystyle
= - \varepsilon^4 \int\limits_{V^\varepsilon} \big| \nabla_x g(\frac{x}{\varepsilon}) \big|^2 dx -  \varepsilon^3 \int\limits_{\partial \, V^\varepsilon} h(\frac{x}{\varepsilon}) \, n^- \, g(\frac{x}{\varepsilon}) \, d\sigma(x).
\end{array}
\end{equation}
On the other hand, using the first equality in  \eqref{g-App2} and integrating by parts
we transform the left hand side of \eqref{App2-1} as follows:
\begin{equation}\label{App2-3}
\begin{array}{l} \displaystyle
\int\limits_{V^\varepsilon} \varepsilon^2 \triangle_x g(\frac{x}{\varepsilon}) \, \varepsilon^2 g(\frac{x}{\varepsilon}) dx
\\   [2mm]  \displaystyle
= \varepsilon^2 \int\limits_{V^\varepsilon} \mathbb{E} \big[ ( \mathbb{I} + \nabla_\xi h ) \chi^\omega \big] \, g(\frac{x}{\varepsilon}) dx -  \varepsilon^2 \int\limits_{V^\varepsilon} \big( \mathbb{I} +  \nabla_\xi h(\frac{x}{\varepsilon}) \big) g(\frac{x}{\varepsilon}) dx - \varepsilon^3 \int\limits_{V^\varepsilon}  \nabla_x h(\frac{x}{\varepsilon}) g(\frac{x}{\varepsilon}) dx
\\ [2mm]  \displaystyle
= \varepsilon^2 \int\limits_{V^\varepsilon} \left( \mathbb{E} \big[ ( \mathbb{I} + \nabla_\xi h) \chi^\omega \big] - (\mathbb{I} + \nabla_\xi h(\frac{x}{\varepsilon}) ) \right) g(\frac{x}{\varepsilon}) dx
\\ [2mm]  \displaystyle
- \varepsilon^3 \int\limits_{\partial  V^\varepsilon} h(\frac{x}{\varepsilon})  n^-  g(\frac{x}{\varepsilon})  d\sigma(x) +
\varepsilon^3 \int\limits_{V^\varepsilon}  h(\frac{x}{\varepsilon}) \nabla_x g(\frac{x}{\varepsilon}) dx.
\end{array}
\end{equation}
Thus, \eqref{App2-1} - \eqref{App2-3} imply
\begin{equation}\label{App2-4}
\varepsilon^4 \int\limits_{V^\varepsilon} \big| \nabla_x g(\frac{x}{\varepsilon}) \big|^2 dx =
- \varepsilon^2 \int\limits_{V^\varepsilon} \Big( \mathbb{E} \big[ (  \mathbb{I} + \nabla_\xi h ) \chi^\omega \big] - ( \mathbb{I} + \nabla_\xi h(\frac{x}{\varepsilon}) )\Big) g(\frac{x}{\varepsilon}) dx -
\varepsilon^3 \int\limits_{V^\varepsilon}  h(\frac{x}{\varepsilon}) \nabla_x g(\frac{x}{\varepsilon}) dx.
\end{equation}
We get from \eqref{App2-4} that
\begin{equation}\label{App2-5}
\| \nabla_x \Psi_\varepsilon \|^2_{L^2(V^\varepsilon)} \le A \| \Psi_\varepsilon \|_{L^2(V^\varepsilon)} + \| \varepsilon h \|_{L^2(V^\varepsilon)} \, \| \nabla_x \Psi_\varepsilon \|_{L^2(V^\varepsilon)}.
\end{equation}
We have used here the fact that $\Lambda^\omega_\varepsilon =  \mathbb{E} \big[ ( \mathbb{I} + \nabla_\xi h^\omega) \chi^\omega \big] - (\mathbb{I} + \nabla_\xi h^\omega(\frac{x}{\varepsilon}) ) \chi^\omega (\frac{x}{\varepsilon}) $ is a stationary random field with finite second moment $\mathbb{E} (\Lambda^\omega_\varepsilon)^2 < +\infty$. Moreover, by the Birkhoff' theorem
\begin{equation}\label{App2-5bis}
\lim_{\varepsilon \to 0}\mathbb{E} (\Lambda^\omega_\varepsilon) = 0,
\end{equation}
and thus $\Lambda^\omega_\varepsilon$ a.s. weakly converges to zero in $L^2(B_0)$ as $\eps\to0$.

\medskip
Next we apply the results on extensions in random perforated domains, see \cite{ACDP}, \cite{JKO}. According to these results
there exists a liner extension operator $L:H^1(V^\varepsilon) \to H^1(B_0)$ such that for any $f \in H^1(V^\varepsilon)$
$$
Lf|_{H^1(V^\varepsilon)} = f, \qquad \| Lf \|_{L^2(B_0)} \le C \, \|f\|_{L^2(V^\varepsilon)}, \qquad \|\nabla Lf \|_{L^2(B_0)} \le \tilde C \, \|\nabla f\|_{L^2(V^\varepsilon)},
$$
where the constants $C$ and $\tilde C$ do not depend on $\varepsilon$.  Keeping for the extended function $L\Psi_\varepsilon$
the same notation $\Psi_\varepsilon$ and considering the Dirichlet boundary condition on $\partial B_0$ in \eqref{g-App2}, by the Friedrichs inequality  we obtain
%Next we apply the Friedrichs inequality to function $\Psi_\varepsilon(\frac{x}{\varepsilon})  \in H^1(V^\varepsilon)$ in a connected
%bounded domain $V^\varepsilon$ with the Dirichlet boundary condition \eqref{g-App2}:
\begin{equation}\label{App2-FI}
\| \Psi_\varepsilon \|^2_{L^2(V^\varepsilon)} \le \| \Psi_\varepsilon \|^2_{L^2(B_0)}\leq
c_1 \| \nabla_x \Psi_\varepsilon \|^2_{L^2(B_0)}
\leq C \| \nabla_x \Psi_\varepsilon \|^2_{L^2(V^\varepsilon)}.
\end{equation}
Combining this with  \eqref{App2-5} yields
\begin{equation}\label{App2-6}
\| \nabla_x \Psi_\varepsilon \|_{L^2(V^\varepsilon)} \le A_1, \qquad \| \Psi_\varepsilon \|_{L^2(V^\varepsilon)} \le A_2
\end{equation}
with the constants $A_1$ and $A$ that do not depend on $\eps$.
Thus a.s.  the family of functions $\{ \Psi_\varepsilon  \}$ is bounded in $H^1(V^\varepsilon)$ and in $H^1(B_0)$. Due to the compactness of embedding  of $H^1(B_0)$ in $L^2(B_0)$ we can pass to the limit in the product $\Lambda^\omega_\varepsilon \, \Psi_\varepsilon$ as $\varepsilon \to 0$. Thus the integral
$$
\varepsilon^2 \int\limits_{V^\varepsilon} \Big( \mathbb{E} \big[ (  \mathbb{I} + \nabla_\xi h ) \chi^\omega \big] - ( \mathbb{I} + \nabla_\xi h(\frac{x}{\varepsilon}) )\Big) g(\frac{x}{\varepsilon}) dx=
\int\limits_{B_0} \Big( \mathbb{E} \big[ (  \mathbb{I} + \nabla_\xi h ) \chi^\omega \big] - ( \mathbb{I} + \nabla_\xi h(\frac{x}{\varepsilon}) )\Big)
\chi^\omega \Big(\frac x\eps\Big)\Psi_\eps (\frac{x}{\varepsilon}) dx
$$
tends to zero as $\eps\to 0$ a.s. Taking into account relations \eqref{small_norm h} we derive from \eqref{App2-4}
that
%
%Consequently the constants in bounds \eqref{App2-FI} - \eqref{App2-6} can be taken in such way that they depend only on domain $V$.
%Therefore,  the family of functions $\{ \Psi_\varepsilon  \}$ is bounded in $H^1(V)$ and  compact in $L^2(V)$, and there exists a
%converging subsequence $\Psi_{\epsilon_k}$ as $\varepsilon_k \to 0$.
%
%It follows from \eqref{App2-4}, \eqref{App2-5bis} and \eqref{corh} that
\begin{equation}\label{App2-7bis}
\| \nabla_x \Psi_{\varepsilon} \|_{L^2(B_0)} \ \to \ 0
\end{equation}
and, by the Friedrichs inequality,
\begin{equation}\label{App2-7}
\| \Psi_{\varepsilon} \|_{L^2(B_0)} \ \to \ 0.
\end{equation}
%Using the same reasoning as above in App.1 we conclude that \eqref{App2-7} holds for any sequence $\varepsilon \to 0$, i.e. this relation
%yields a sub-quadratic growth  \eqref{corg} of $g^\omega$ as $\varepsilon \to 0$.

\section{Appendix 2. Proofs of  Propositions  \ref{prop_Phi} and \ref{p_existence_phi}}\label{app_3}

We begin this section by proving Proposition
\ref{prop_Phi}.  Denote  $\Phi_\varepsilon (x) : = \varepsilon^2 q (x, \frac{x}{\varepsilon})$.
%\eqref{addi_conv0}
%\eqref{corPhi} for the functions
Then $\Phi_\varepsilon (x) $ is a solution of the following problem:
\begin{equation}\label{App3-0}
\begin{array}{c}
\displaystyle
 \triangle_x \Phi_\varepsilon (x) \ = \ \Upsilon(x), \quad x \in V^\varepsilon =  \eps\mathcal{G}_0^\omega \cap B_0,
\\[2mm]
\displaystyle
 \nabla_x  \Phi_\varepsilon (x) \cdot n^- \ = \ - \varepsilon^2 \nabla_x f_j (x, \frac{x}{\varepsilon}) \cdot n^+, \quad x \in  \partial ( \varepsilon  \mathcal{G}_j) \cap B_0,
\\[2mm]
\displaystyle
\Phi_\varepsilon (x) \ = \ 0, \quad x \in \partial B_0.
\end{array}
\end{equation}
In what follows we will use the following notations:
$$
\nabla  f (x, \frac{x}{\varepsilon}) = \nabla_x  f (x, \frac{x}{\varepsilon}), \quad \nabla  h (\xi) = \nabla_\xi  h (\xi), \quad  \nabla \Phi_\varepsilon (x) = \nabla_x \Phi_\varepsilon (x).
$$
%$V\subset \mathbb{R}^d$ is a bounded domain.
In order to show that the functions $\Phi_\varepsilon (x)$ are bounded in $H^1(V^\varepsilon)$ we follow the line of the proof in the previous sectiuon. Multiplying the equation in \eqref{App3-0} by   $\Phi_\varepsilon (x)$ and integrating the resulting relation over
$V^\eps$  after integration by paths we obtain
\begin{equation}\label{App3-1}
\begin{array}{l}
\displaystyle
\int\limits_{V^\varepsilon} \Upsilon(x) \,  \Phi_\varepsilon (x) dx=\int\limits_{V^\varepsilon}  \triangle \Phi_\varepsilon (x) \, \Phi_\varepsilon (x) dx =
   \int\limits_{\partial \, V^\varepsilon} \Phi_\varepsilon (x) \,  \nabla \Phi_\varepsilon (x) \cdot n^- \,  d\sigma(x)
\\[2mm]
\displaystyle
-  \int\limits_{V^\varepsilon} \big| \nabla \Phi_\varepsilon (x) \big|^2 dx=  -  \int\limits_{V^\varepsilon} \big|  \nabla \Phi_\varepsilon (x) \big|^2 dx  -  \sum\limits_j  \, \int\limits_{  \varepsilon \partial {\mathcal{G}}^\omega_j} \Phi_\varepsilon (x) \,  \varepsilon^2 \nabla f_j (x, \frac{x}{\varepsilon})  \cdot n^+ \,  d\sigma(x).
\end{array}
\end{equation}
%To find the upper bound on $\int\limits_{V^\varepsilon} \big| \nabla_x \Phi_\varepsilon (x) \big|^2 dx \,$ we will estimate
%the integral in the l.h.s. and the last integral in the r.h.s. of \eqref{App3-1} separately.
%Using  \eqref{App3-0} we conclude that the integral in the l.h.s of \eqref{App3-1} equals to
%and
%\begin{equation}\label{App3-2}
%\int\limits_{V^\varepsilon}  \triangle_x \Phi_\varepsilon (x) \,  \Phi_\varepsilon (x) dx =
%\int\limits_{V^\varepsilon} \Upsilon(x) \,  \Phi_\varepsilon (x) dx.
%\end{equation}
By the Friedrichs inequality
\begin{equation}\label{App3-2bis}
\Big|\int\limits_{V^\varepsilon} \triangle \Phi_\varepsilon (x) \,  \Phi_\varepsilon (x) dx\Big| \le C_1
\|\Upsilon(x)\|_{L^2(B_0)} \, \| \nabla \Phi_\varepsilon (x) \|_{L^2(V^\varepsilon)}.
\end{equation}
with a constant $C_1$ that does not depend on $\eps$.
%We use here that the Friedrichs inequality in $V^\varepsilon$ holds with a constant $C_1=C_1(V)$ that depends only on $V$.

To estimate the second integral on the right-hand side of \eqref{App3-1} we  extend the functions $ \Phi_\varepsilon$ on $B_0$, denote the extended functions by  $\bar \Phi_\varepsilon(x)$ and apply the Stokes formula. This yields
\begin{equation}\label{App3-3}
\begin{array}{l}
\displaystyle
 \int\limits_{\varepsilon \partial {\mathcal{G}^\omega_j}} \varepsilon^2  \nabla  f_j (x, \frac{x}{\varepsilon}) \cdot n^+  \Phi_\varepsilon (x) \,  d \sigma(x)
\\ [2mm] \displaystyle
= \int\limits_{\varepsilon \mathcal{G}_j^\omega} \varepsilon^2 \triangle  f_j (x, \frac{x}{\varepsilon})  \,  \bar \Phi_\varepsilon (x) \, dx
 + \int\limits_{\varepsilon \mathcal{G}_j^\omega} \varepsilon^2  \nabla  f_j (x, \frac{x}{\varepsilon}) \cdot   \nabla \bar \Phi_\varepsilon (x) \, dx.
\end{array}
\end{equation}
%Thus \eqref{App3-3} together with
From this relation by the Friedrichs inequality we derive
the following upper bound:
% for the second integral in the right hand side of \eqref{App3-1}:
\begin{equation}\label{App3-4bis}
\sum\limits_j \, \Big| \int\limits_{ \varepsilon\partial \mathcal{G}_j^\omega}   \varepsilon^2 \nabla f_j (x, \frac{x}{\varepsilon})  \cdot n^+ \,  \Phi_\varepsilon (x) \, d\sigma(x) \Big| \le C \, \|   \nabla \Phi_\varepsilon   \|_{L^2(V^\varepsilon)}.
\end{equation}
Finally, \eqref{App3-1}, \eqref{App3-2bis} and \eqref{App3-4bis} imply the desired upper bound:
\begin{equation}\label{App3-5}
\|  \nabla \Phi_\varepsilon \|_{L^2(V^\varepsilon)} \le \tilde C,
\end{equation}
i.e. the  functions $\Phi_\varepsilon (x)$ are bounded in $H^1(V^\varepsilon)$.
Consequently, the extensions $\bar \Phi_\varepsilon (x)$  are also bounded in $H^1(B_0)$ and form a compact set in $L^2(B_0)$.
Thus, there exists $\Phi_0 \in H^1(B_0)$ such that, for a subsequence,
$$
\|  \bar\Phi_{\varepsilon} - \Phi_0 \|_{L^2(B_0)} \to 0.
$$
Our goal is to prove that $\Phi_0 \equiv 0$, or equivalently
\begin{equation}\label{App3-6}
\|  \Phi_\varepsilon \|_{L^2(B_0)} \to 0 \quad \mbox{as } \; \varepsilon \to 0.
\end{equation}
\medskip
For an arbitrary $\hat\psi \in C^\infty(G_\eps^\omega)$ with a compact support in $B_0$ we have
\begin{equation}\label{App3-7}
\begin{array}{c}
\displaystyle
\int\limits_{V^\varepsilon}  \triangle \Phi_\varepsilon (x) \, \hat\psi (x) \, dx
\\[2mm]
\displaystyle
= -  \int\limits_{V^\varepsilon} \nabla \Phi_\varepsilon (x) \cdot \nabla \hat\psi (x) \, dx \ + \  \sum\limits_j \ \int\limits_{\eps\partial  \mathcal{G}_j^\omega}  \varepsilon^2 \nabla f_j(x, \frac{x}{\varepsilon})  \cdot n^+ \,
%\chi_{\varepsilon {\cal D}^{k,\omega}_{j(k)}}(x) \,
\hat\psi(x) \, d\sigma(x).
\end{array}
\end{equation}
On the other hand,
\begin{equation}\label{App3-8}
\int\limits_{V^\varepsilon}  \triangle_x \Phi_\varepsilon (x) \, \hat\psi (x) dx =
\int\limits_{V^\varepsilon} \Upsilon(x) \hat\psi(x) \, dx = \int\limits_{B_0} \Upsilon(x) \, \chi_{\{\varepsilon\mathcal{G}^\omega_0\}}(x)
\, \hat\psi(x) \, dx.
\end{equation}
Therefore,
%Then it follows from \eqref{App3-7} - \eqref{App3-8} that
\begin{equation}\label{App3-9}
\begin{array}{l}
\displaystyle
 \int\limits_{B_0} \nabla \Phi_\varepsilon (x) \cdot \nabla \hat\psi (x) \,  \chi_{\{\eps\mathcal{G}^\omega_0\}}(x) \, dx
 \\ [2mm] \displaystyle
= \sum\limits_j \ \int\limits_{\eps\partial  \mathcal{G}_j^\omega}  \varepsilon^2 \nabla f_j (x, \frac{x}{\varepsilon})  \cdot n^+ \, %\chi_{\varepsilon {\cal D}^{k,\omega}_{j(k)}}(x) \,
\hat\psi(x) \, d\sigma(x) - \int\limits_{V^\varepsilon} \Upsilon(x) \hat\psi(x) \, dx.
\end{array}
\end{equation}
For an arbitrary $\psi\in C_0^\infty(B_0)$, substituting in the last relation the function
$\psi(x)+\eps h \big(\frac x\eps\big)\nabla \psi(x)$ for $\hat\psi$ we obtain
\begin{equation}\label{App3-9biss}
\begin{array}{l}
\displaystyle
 \int\limits_{B_0} \nabla \Phi_\varepsilon (x) \cdot \big(\nabla \psi (x)+
 \nabla h \big(\frac x\eps\big)\nabla \psi(x) \big) \,  \chi_{\{\eps\mathcal{G}^\omega_0\}}(x) \, dx + o(1)
 \\ [2mm] \displaystyle
= \sum\limits_j \ \int\limits_{\eps\partial  \mathcal{G}_j^\omega}  \varepsilon^2 \nabla f_j (x, \frac{x}{\varepsilon})  \cdot n^+ \, %\chi_{\varepsilon {\cal D}^{k,\omega}_{j(k)}}(x) \,
\hat\psi(x) \, d\sigma(x) - \int\limits_{V^\varepsilon} \Upsilon(x) \hat\psi(x) \, dx
 \\ [2mm] \displaystyle
= - \sum\limits_j \ \int\limits_{\eps \mathcal{G}_j^\omega}  \varepsilon^2 \triangle f_j (x, \frac{x}{\varepsilon})
\psi(x) \, d x - \int\limits_{V^\varepsilon} \Upsilon(x) \psi(x) \, dx + o(1),
\end{array}
\end{equation}
where $o(1)$ a.s. tends to zero as $\eps\to 0$; here we have used the inequality $\| \varepsilon h \big(\frac{\cdot}{\varepsilon}\big)\|_{H^1(\eps\mathcal{G}_0^\omega)}\leq C$ and the fact that $\| \varepsilon h \big(\frac{\cdot}{\varepsilon}\big)\|_{L^2(\eps\mathcal{G}_0^\omega)}$ vanishes as $\eps\to0$.
Using representation \eqref{Y}  of the function $\Upsilon(x)$,
the Stokes formula and the Birkhoff ergodic theorem we conclude that the right-hand side in \eqref{App3-9biss} tends to 0 as $\varepsilon \to 0$ for any $\psi \in C_0^{\infty}(B_0) $ and thus
%Denoting the r.h.s. of \eqref{App3-9} by $Q^\varepsilon (\psi)$ we can rewrite \eqref{App3-9} as the following problem
%to $\Phi_\varepsilon$:
%Then \eqref{App3-9} yields
\begin{equation}\label{App3-10}
\begin{array}{l}
\displaystyle
\lim\limits_{\eps\to0}\, \int\limits_{B_0} \nabla \Phi_\varepsilon (x) \cdot \Big(\nabla \psi (x)+
\nabla h \big(\frac x\eps\big)\nabla \psi(x) \Big)  \,  \chi_{\{\eps\mathcal{G}_0^\omega\}}(x) \, dx =0
%Q^\varepsilon (\psi),\quad \mbox{with } \;  Q^\varepsilon (\psi) \to 0 \; \; (\varepsilon \to 0),
 \\[5mm]
\displaystyle
\Phi_\varepsilon (x)\big|_{\partial B_0} =0.
\end{array}
\end{equation}
The subsequence of $ \nabla \Phi_\varepsilon$ converges weakly in $(L^2(B_0))^d$ to $\nabla\Phi_0$, as $\eps\to0$. By the definition
of  matrix $\Theta$ the sequence $ \big(\nabla \psi +
\nabla h \big(\frac \cdot\eps\big)\nabla\psi\big)  \,  \chi_{\{\eps\mathcal{G}_0^\omega\}}$ converges weakly in $(L^2(B_0))^d$
to $\Theta\nabla\psi$.  Since the function $h(\cdot)$ satisfies equation \eqref{h}, we have
$$
\mathrm{div}\big[ \big(\nabla \psi(x) +
\nabla h \big(\frac x\eps\big)\nabla\psi(x)\big)  \,  \chi_{\{\eps\mathcal{G}_0^\omega\}}(x)\big]=
\big(\Delta\psi(x)+\nabla h \big(\frac x\eps\big)\nabla\nabla\psi(x)\big)  \,  \chi_{\{\eps\mathcal{G}_0^\omega\}}(x).
$$
 The right-hand side here is bounded in $L^2(B_0)$ and thus compact in $H^{-1}(B_0)$. By the compensated compactness theorem,
 see \cite{Murat}, we obtain
$$
0=\lim\limits_{\eps\to0}\, \int\limits_{B_0} \nabla \Phi_\varepsilon (x) \cdot \Big(\nabla \psi (x)+
\nabla h \big(\frac x\eps\big)\nabla\psi\Big)  \,  \chi_{\{\eps\mathcal{G}_0^\omega\}}(x) \, dx
=\int_{B_0}\nabla\Psi_0\cdot\Theta\nabla\psi\,dx.
$$
Since $\Phi_0=0$ on $\partial B_0$, this implies that $\Phi_0=0$, and  \eqref{App3-6} follows. This convergence to $\Phi_0=0$ holds for the whole family $\Phi_\varepsilon^\omega$.

The proof of other statements of Proposition \ref{prop_Phi} is now straightforward.

\bigskip
We turn to the proof of Proposition \ref{p_existence_phi}. Denote
$$
\tilde\Xi^\eps\big(x\big)=f_0 (x)
+ \varepsilon (\nabla f_0 (x), h(\frac{x}{\varepsilon}))
+  \varepsilon^2 (\nabla \nabla f_0 (x), g(\frac{x}{\varepsilon})) + \varepsilon^2 q (x, \frac{x}{\varepsilon}).
$$
For each $(j,i)$ with $j\in\{1,\ldots,N\}$ and $i\in\mathbb Z$ we define an open set  $\eps\mathcal{Q}^\varkappa_{\omega, ji} = \{x\in\mathbb R^d\,:\, \mathrm{dist}(x,\eps\partial \mathcal{G}^{\omega,i}_{j})<\eps\varkappa \}$ and introduce in this set coordinates $y$
such that $y'=(y_2,\ldots,y_d)$ are smooth coordinates on $\eps\partial\mathcal{G}^{\omega,i}_{j}$, and $y_1$ is directed along the exterior normal,
$y_1=-\mathrm{dist}(x,\eps\partial \mathcal{G}^{\omega,i}_{j})$ if $x\in \eps\mathcal{G}^{\omega, i}_j$ and $y_1=\mathrm{dist}(x,\eps\partial \mathcal{G}^{\omega,i}_{j})$ if $x\in \mathbb R^d\setminus\eps\mathcal{G}^{\omega,i}_{j}$. Under our assumptions on the geometry of $\mathcal{G}^{\omega,i}_{j}$ there exists $\varkappa>0$ such that
\begin{itemize}
  \item $\eps\mathcal{Q}^\varkappa_{\omega,ji}$ do not intersect with $\eps\mathcal{Q}^\varkappa_{\omega, km}$, if $(j,i)\not=(k,m)$.
  \item Coordinates $y=y(x)$ are well defined in $\eps\mathcal{Q}^\varkappa_{\omega, ji}$, that is $y=y(x)$ is an invertible diffeomorphism.
%  \item
\end{itemize}
Letting $\Xi^\eps\big(y\big)=\tilde\Xi^\eps\big(x(y)\big)$ and $ \check f^\eps(y)=\hat f_{j}(\hat  x^{\omega,i}_{j}, \frac{x(y)- \hat x^{\omega,i}_{j}}{\eps})$, in $\eps\mathcal{Q}^\varkappa_{\omega, ji}\cap\eps\mathcal{G}^{\omega, i}_{j}$ we then define
$$
\eps\phi^{\omega}_j(y)=\Big(\Xi^\eps\big(0,y'\big)-
 \check f^\eps_{j}(0,y')
+y_1\big[\frac\partial{\partial y_1}\Xi^\eps\big(0,y'\big)-\frac\partial{\partial y_1} \check f^\eps_{j}\big(0,y'\big)\big]-\overline\Xi\Big)
\theta\big(-\frac{y_1}{\eps}\big)+\overline\Xi,
$$
where $\theta(s)$ is a $C^\infty$ cut-off function such that $0\leq\theta\leq 1$, $\theta=1$ for $s<\frac\varkappa3$ and
 $\theta=0$ for $s>\frac{2\varkappa}3$;  $\overline\Xi$ is the mean value of $\Xi^\eps$ over
$\eps\mathcal{Q}^\varkappa_{\omega, ji}\cap(\mathbb R^d\setminus\eps\mathcal{G}^{\omega,i}_{j})$.

By construction \eqref{req1_phi} and \eqref{req2_phi} are fulfilled.  Relation \eqref{req3_phi} follows from the properties of the correctors
and elliptic estimates, see \cite[Chapter X]{GT}.

\end{document}